\DeclareMathOperator\erfc{erfc}
\definecolor{mypink1}{rgb}{0.858, 0.188, 0.478}
\definecolor{mypink2}{RGB}{219, 48, 122}
\definecolor{mypink3}{cmyk}{0, 0.7808, 0.4429, 0.1412}
\definecolor{mygray}{gray}{0.6}
\definecolor{gris75}{gray}{0.25}
\definecolor{violet}{rgb}{0.5,0,0.5}
\definecolor{BrickRed}{rgb}{0.58, 0.0, 0.83}
\definecolor{armygreen}{rgb}{0.29, 0.33, 0.13}
\definecolor{brass}{rgb}{0.71, 0.65, 0.26}
\definecolor{antiquefuchsia}{rgb}{0.57, 0.36, 0.51}
\definecolor{amethyst}{rgb}{0.6, 0.4, 0.8}
\definecolor{mauvetaupe}{rgb}{0.57, 0.37, 0.43}
\newcommand{\R}{\ensuremath{\mathbb{R}}}
\newcommand{\N}{\ensuremath{\mathbb{N}}}
\newcommand{\C}{\ensuremath{\mathbb{C}}}
\newcommand{\E}{\ensuremath{\mathbb{E}}}
\newcommand{\Var}{\ensuremath{\operatorname{Var}}}
\newcommand{\bz}{\ensuremath{\mathbf{Z}}}
\newcommand{\bb}{\ensuremath{\mathbf{B}}}
\newcommand{\bj}{\ensuremath{\mathbf{J}}}
\newcommand{\bw}{\ensuremath{\mathbf{W}}}
\newtheorem{definition}{Definition}[section]
\newtheorem{proposition}{Proposition}[section]
\newtheorem{theorem}{Theorem}[section]
\newtheorem{remark}{Remark}[section]
\newtheorem{lemma}{Lemma}[section]
\title[Stochastic PDEs]{Mild Solutions for Time--Fractional Stochastic Nonlocal Diffusion Equations}
\author[M. Alwohaibi, D. Alsaleh, M. El-Beltagy, M. Majdoub, E. Mliki]{\tiny Maram Alwohaibi, Dana Alsaleh, Mohamed El-Beltagy, Mohamed Majdoub, Ezzedine Mliki}
\address[M. Alwohaibi, D. Alsaleh, M. Majdoub, E. Mliki]{\small Department of Mathematics, College of Science, Imam Abdulrahman Bin Faisal University\\ P. O. Box 1982, Dammam, Saudi Arabia}
\address[M. Alwohaibi, D. Alsaleh, M. Majdoub, E. Mliki]{\small Basic and Applied Scientific Research Center, Imam Abdulrahman Bin Faisal University\\ P.O. Box 1982, 31441, Dammam, Saudi Arabia}
\address[Mohamed El-Beltagy]{\small Engineering Mathematics and Physics Dept., Faculty of Engineering, Cairo University, Egypt}
\email{\sl \color{blue}{malwohaibi@iau.edu.sa}}
\email{\sl \color{blue}{dalsaleh@iau.edu.sa}}
\email{\sl \color{blue}{zbeltagy@eng.cu.edu.eg}}
\email{\sl {\color{blue}{mmajdoub@iau.edu.sa}}}
\email{\sl \color{blue}{ermliki@iau.edu.sa}}
\begin{document}
 
\begin{abstract}
We study a time--space nonlocal diffusion equation driven by additive time--space white noise, where the time derivative is the Caputo derivative of order $\alpha\in(0,2)$. The model couples local diffusion with a nonlocal convolution operator generated by a radial probability density, thus incorporating memory effects and long-range spatial interactions.

For Dirac initial data, we derive an explicit solution formula in the space of tempered distributions, decomposing the solution into a deterministic part and a stochastic convolution kernel expressed through Mittag--Leffler functions. Our main contribution is a sharp characterization of the existence of mild solutions in terms of $\alpha$, the spatial dimension $N$, and the coefficients of the local and nonlocal diffusion terms. In particular, when the Laplacian term is absent, no mild solution exists, whereas for $\lambda>0$ the admissible regimes depend critically on $(\alpha,N)$, extending and sharpening the known results for purely local fractional stochastic heat equations. Numerical simulations illustrate the evolution of the mean and variance and emphasize subdiffusive spreading and memory effects.
\end{abstract}


\subjclass[2020]{Primary: 35R11, 60H15;
Secondary: 35K05, 35D30, 26A33, 60H40.}
\keywords{Time-fractional stochastic diffusion;
Caputo derivative;
Nonlocal operators;
Time--space white noise;
Mittag--Leffler functions;
Mild solutions.}


\date{\today}

\maketitle


\section{Introduction}
\label{S1}

Fractional-in-time evolution equations have become a standard tool for modeling \emph{anomalous diffusion} and
transport phenomena with memory and hereditary effects.  Historically, fractional operators already appear
in Abel's work on integral equations, and the Mittag--Leffler function---introduced by Mittag--Leffler in 1903---is
now recognized as the natural analogue of the exponential function for fractional dynamics.  In particular,
for $0<\alpha<1$ the Caputo derivative $\partial_t^\alpha$ typically produces \emph{subdiffusive} behavior, while
$\alpha>1$ leads to regimes that are often associated with faster-than-classical propagation and wave-like memory.

When random perturbations are present, fractional stochastic partial differential equations provide a flexible and
natural framework for modeling systems subject to uncertainty and long-range temporal dependence; see, for instance,
the general background in \cite{GM1998,Gorenflo2020,Podlubny1999,KST}.
In the presence of time--space white noise, a convenient and rigorous approach is to interpret the noise in the
distributional sense within the Hida space $(\mathscr{S})^\ast$, which allows one to work with solutions taking values
in the space of tempered distributions.
Within this framework, stochastic integrals with deterministic integrands can be defined via the
It\^o--Skorohod integral and Wick multiplication, yielding a powerful calculus for stochastic evolution equations;
see \cite{Benth,HidaBook,LindstromUboe}.
This approach has proved particularly effective in the analysis of fractional stochastic heat-type equations driven by
time--space white noise, as developed recently in \cite{Hach}.

In this paper we study a \emph{time--space nonlocal} fractional diffusion model driven by additive time--space white noise:
\begin{equation}
    \label{Main-eq}
    \partial^\alpha_t \bz(t,x)= \lambda \Delta \bz(t,x) + \mu\Big(\mathbf{J}\ast\bz(t,x)-\bz(t,x)\Big)+\sigma \bw(t,x),
\end{equation}
where $\alpha\in(0,2)$, $\lambda,\mu\ge0$, $\sigma\in\mathbb{R}$, and $\bj$ is a radial probability density on
$\mathbb{R}^N$.  The Laplacian models local diffusion, whereas the nonlocal term $\mu\Big(\mathbf{J}\ast\bz-\bz\Big)$ accounts for
long-range spatial interactions (jump-like mixing), which naturally arises in models of dispersal and nonlocal
transport.

Fractional stochastic heat equations without spatial nonlocality ($\mu=0$) have been investigated in depth; in
particular, Moulay Hachemi and \O ksendal \cite{Hach} obtained existence and partial mildness criteria for the purely local
case, highlighting how the admissible spatial dimensions depend sharply on $\alpha$.
However, the interplay between fractional time dynamics, \emph{nonlocal} spatial diffusion, and stochastic forcing
is less understood, and the influence of the nonlocal term on the key space--time integrability properties of the
stochastic convolution has not been fully clarified.

The first goal of the present work is to derive an explicit representation formula for the solution of
\eqref{Main-eq} under singular initial data,
\begin{equation}\label{eq:IC-Dirac}
Z(0,x)=\delta_0(x),
\end{equation}
together with the natural decay condition at infinity and, when $\alpha\in(1,2)$, the standard additional Caputo
compatibility condition $\partial_t \bz(0,x)=0$.
By combining Laplace and Fourier transform techniques with sharp properties of Mittag--Leffler functions, we obtain
a decomposition $\bz=\bz_1+\bz_2$ into a deterministic component and a stochastic convolution term, yielding a natural notion
of mild solution in the tempered-distribution setting.

Our second and main objective is to provide \emph{sharp} existence criteria for mild solutions in terms of the
parameters $(\alpha,N,\lambda,\mu)$.  The key difficulty lies in the stochastic part: by It\^o isometry and
Plancherel's theorem, mildness reduces to delicate space--time integrability problems involving squares of
Mittag--Leffler kernels.  We establish optimal conditions based on the precise asymptotic behavior of these special
functions, leading to a complete classification of admissible regimes.

A striking consequence is that if the local diffusion is absent ($\lambda=0$ and $\mu>0$), then \emph{no mild solution}
exists in any spatial dimension and for any $\alpha\in(0,2)$.  In contrast, when $\lambda>0$ the Laplacian dominates
the nonlocal operator at high frequencies, and mild solutions may exist; the admissible dimensions depend strongly on
$\alpha$ (see Theorem~\ref{Thm2}).
Finally, we complement the analysis with numerical simulations illustrating the behavior of the mean and the variance
in representative regimes, with emphasis on subdiffusive spreading and memory effects induced by the fractional time
derivative.
These results extend and sharpen earlier ones for the purely local case and highlight the subtle role played by fractional time derivatives.

Our first main result can be stated as follows
\begin{theorem}
    \label{Thm1}
The unique solution $\mathbf{Z}(t,x)\in \mathcal{S}'$ of \eqref{Main-eq}-\eqref{eq:IC-Dirac} is given by
\begin{equation}\label{Sol}
\begin{aligned}
\bz(t,x)
&= \underbrace{(2\pi)^{-N}
   \int\limits_{\R^N} 
   \mathrm{e}^{\,\mathrm{i}\, x\;\cdot\;\xi}\,
   E_{\alpha}\!\big(-a(\xi)\, t^{\alpha}\big)\,
   \mathrm{d}\xi}_{\bz_1(t,x)} \\[0.5em]
&\quad + 
   \underbrace{\sigma(2\pi)^{-N}
   \int\limits_{0}^{t}\,\int\limits_{\R^N}\,\int\limits_{\R^N}
   \mathrm{e}^{\,\mathrm{i}\,(x-y)\;\cdot\;\xi}\,
   \Lambda(t-\tau,\xi)\, \bw(\tau,y)\,
   \mathrm{d}\xi\,\mathrm{d}y\,\mathrm{d}\tau}_{\bz_2(t,x)},
\end{aligned}
\end{equation}
where
\begin{equation}
    \label{Z1}
    a(\xi)=\lambda|\xi|^2+\mu \left(1-\hat{\bj}(\xi)\right),
\end{equation}
and
\begin{equation}
    \label{Z2}
\Lambda(t,\xi)= t^{\alpha-1} E_{\alpha, \alpha}\left(-t^\alpha a(\xi)\right).
    \end{equation}
\end{theorem}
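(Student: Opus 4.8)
The plan is to solve \eqref{Main-eq}–\eqref{eq:IC-Dirac} by applying the spatial Fourier transform and the temporal Laplace transform, then inverting. First I would take the Fourier transform in $x$. Writing $\widehat{\bz}(t,\xi)$ for the transform and using that convolution becomes multiplication, $\widehat{\mathbf{J}\ast\bz}=\widehat{\bj}(\xi)\,\widehat{\bz}$, equation \eqref{Main-eq} turns into the family of scalar fractional ODEs
\begin{equation}\label{eq:fourierODE}
\partial_t^\alpha \widehat{\bz}(t,\xi) = -a(\xi)\,\widehat{\bz}(t,\xi) + \sigma\,\widehat{\bw}(t,\xi),
\qquad a(\xi)=\lambda|\xi|^2+\mu\bigl(1-\widehat{\bj}(\xi)\bigr),
\end{equation}
with $\widehat{\bz}(0,\xi)=\widehat{\delta_0}(\xi)=1$ (and, for $\alpha\in(1,2)$, $\partial_t\widehat{\bz}(0,\xi)=0$). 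Here I would note that $a(\xi)\ge 0$ since $\bj$ is a probability density forces $|\widehat{\bj}(\xi)|\le 1$, so the resolvent below is well defined; the radiality of $\bj$ makes $\widehat{\bj}$ real-valued.

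Next I would solve \eqref{eq:fourierODE} by Laplace transform in $t$. Using the standard formula for the Laplace transform of the Caputo derivative, $\mathcal{L}[\partial_t^\alpha u](s)=s^\alpha\widetilde u(s)-s^{\alpha-1}u(0)$ when $\alpha\in(0,1)$ (with the extra term $-s^{\alpha-2}u'(0)$ when $\alpha\in(1,2)$, which vanishes by the compatibility condition), I get
\begin{equation}\label{eq:laplacesol}
\widetilde{\widehat{\bz}}(s,\xi) = \frac{s^{\alpha-1}}{s^\alpha+a(\xi)} + \sigma\,\frac{1}{s^\alpha+a(\xi)}\,\widetilde{\widehat{\bw}}(s,\xi).
\end{equation}
Now I invoke the classical Laplace-transform identities for Mittag–Leffler functions, $\mathcal{L}^{-1}\bigl[s^{\alpha-1}/(s^\alpha+a)\bigr](t)=E_\alpha(-a t^\alpha)$ and $\mathcal{L}^{-1}\bigl[1/(s^\alpha+a)\bigr](t)=t^{\alpha-1}E_{\alpha,\alpha}(-a t^\alpha)=\Lambda(t,\xi)$, together with the convolution theorem for the Laplace transform, to obtain
\begin{equation}\label{eq:fourierfinal}
\widehat{\bz}(t,\xi) = E_\alpha\bigl(-a(\xi)t^\alpha\bigr) + \sigma\int_0^t \Lambda(t-\tau,\xi)\,\widehat{\bw}(\tau,\xi)\,\mathrm{d}\tau.
\end{equation}
Finally, inverting the Fourier transform in $\xi$ (with the $(2\pi)^{-N}$ normalization used in the paper, and writing $\widehat{\bw}(\tau,\xi)=\int_{\R^N}\mathrm{e}^{-\mathrm{i}y\cdot\xi}\bw(\tau,y)\,\mathrm{d}y$ in the $(\mathscr{S})^\ast$-valued sense) yields exactly \eqref{Sol} with the stated $\bz_1$ and $\bz_2$. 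For uniqueness, I would run the argument in reverse: any $\mathcal{S}'$-valued solution has a Fourier transform satisfying \eqref{eq:fourierODE} frequencywise, and the scalar fractional Cauchy problem with given initial data has a unique solution, so $\widehat{\bz}$ — hence $\bz$ — is uniquely determined.

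The main obstacle is justifying each transform rigorously in the correct functional-analytic setting rather than formally: the noise $\bw$ lives in the Hida distribution space $(\mathscr{S})^\ast$ and $\bz$ takes values in $\mathcal{S}'$, so Fubini-type interchanges of the Fourier/Laplace integrals with the stochastic (Wick) pairing must be justified, and the Laplace transform of the Caputo derivative requires enough regularity/growth of $\widehat{\bz}(\cdot,\xi)$ in $t$, which one gets a posteriori from the boundedness $|E_\alpha(-a t^\alpha)|\le 1$ and the integrability of $\Lambda(\cdot,\xi)$. I expect this bookkeeping — showing the candidate \eqref{Sol} is a genuine $\mathcal{S}'$-valued (resp. $(\mathscr{S})^\ast$-valued for the stochastic part) object and legitimately solves \eqref{Main-eq} — to be the technical heart, while the algebra producing the Mittag–Leffler kernels is routine.
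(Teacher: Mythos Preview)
Your proposal is correct and follows essentially the same route as the paper: reduce \eqref{Main-eq} to a scalar Caputo ODE in the Fourier variable with symbol $a(\xi)=\lambda|\xi|^2+\mu(1-\widehat{\bj}(\xi))\ge0$, solve it via the Laplace transform using the identities $\mathcal{L}^{-1}[s^{\alpha-1}/(s^\alpha+a)]=E_\alpha(-at^\alpha)$ and $\mathcal{L}^{-1}[1/(s^\alpha+a)]=t^{\alpha-1}E_{\alpha,\alpha}(-at^\alpha)$ together with the convolution theorem, and then invert in $\xi$. The only cosmetic difference is that the paper applies Laplace before Fourier (noting they commute), and it does not spell out the uniqueness argument or the distributional bookkeeping you flag, treating the derivation at the same formal level you describe.
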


\begin{definition}
\label{Mild}
The solution $\bz(t,x)$ is called {\it mild} if $\mathbb{E}[\bz^2(t,x)]<\infty$ for all $t>0, x\in\R^N$.
\end{definition}
Note that, in the purely local case $\mu=0$ in \eqref{Main-eq}, the existence of mild solutions in the sense of
Definition~\ref{Mild} was previously investigated in \cite[Theorem~3, p.~525]{Hach}, where only partial results were obtained.
Our second main result provides a complete classification of mild solutions, thereby sharpening and extending
\cite[Theorem~3, p.~525]{Hach} to the more general setting that includes the nonlocal operator
$\mu\big(\mathbf{J}\ast\bz-\bz\big)$.

In contrast to the purely local framework, the presence of the nonlocal term
$\mu\big(\mathbf{J}\ast\bz-\bz\big)$ modifies the Fourier symbol of the generator from $\lambda|\xi|^{2}$ to
\[
a(\xi)=\lambda|\xi|^{2}+\mu\bigl(1-\widehat \bj(\xi)\bigr),
\]
and the question of mildness becomes a genuinely mixed local--nonlocal space--time integrability problem.
Our analysis reveals, in particular, that in the purely nonlocal regime $\lambda=0$ and $\mu>0$, no mild solution
exists for any $\alpha\in(0,2)$ and any spatial dimension $N\ge1$.
On the other hand, when $\lambda>0$, the Laplacian term dominates the high-frequency behavior of the symbol,
and a sharp classification in terms of $(\alpha,N)$ is recovered.

More precisely, we have the following result.
\begin{theorem}
\label{Thm2}
Let $\bz(t,x)$ be the solution of \eqref{Main-eq} obtained in Theorem~\ref{Thm1}.  
Then the following assertions hold:
\begin{enumerate}[label=(\roman*)]
    \item If $\lambda = 0$ and $\mu>0$, the solution $\bz(t,x)$ is not mild for any spatial dimension $N \ge 1$ and for all $0 < \alpha < 2$.
    
    \item If $\lambda > 0$ and $\mu\geq 0$, then:
    \begin{itemize}
        \item[(a)] For $\alpha = 1$, the solution $\bz(t,x)$ is mild if and only if $N = 1$.
        \item[(b)] For $0 < \alpha < 1$, the solution $\bz(t,x)$ is  mild if and only if  $N=1$ and $2/3<\alpha<1$.
        \item[(c)] For $1 < \alpha < 2$, the solution $\bz(t,x)$ is mild if and only if $N \in \{1,2\}$.
    \end{itemize}
\end{enumerate}
\end{theorem}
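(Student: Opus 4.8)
\emph{Proof sketch.} The plan is to reduce mildness to a single scalar space--time integral built from the Mittag--Leffler kernel $\Lambda$, and then to read off all the thresholds from the precise large-argument behaviour of $E_{\alpha,\alpha}$. Write $\bz=\bz_1+\bz_2$ as in Theorem~\ref{Thm1}, with $\bz_1$ deterministic. Since the noise is centred, $\E[\bz^2(t,x)]=\bz_1(t,x)^2+\E[\bz_2(t,x)^2]$, so in particular $\E[\bz^2(t,x)]\ge \Var\bz_2(t,x)=\E[\bz_2(t,x)^2]$, while for the positive assertions the deterministic term is harmless (for $\lambda>0$ one has $\xi\mapsto E_\alpha(-a(\xi)t^\alpha)\in L^2(\R^N)$ when $N\le 3$, and $\in L^1(\R^N)$, so $\bz_1(t,\cdot)$ bounded continuous, when $N=1$ or $\alpha=1$). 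Carrying out the $\xi$--integration in $\bz_2$ produces a convolution against $G(s,\cdot)=\mathcal F^{-1}[\Lambda(s,\cdot)]$, and the It\^o--Skorohod isometry followed by Plancherel's theorem give, for a constant $c_N>0$ and independently of $x$,
\[
\E[\bz_2(t,x)^2]=c_N\sigma^2\int_0^t\!\!\int_{\R^N}\big|\Lambda(s,\xi)\big|^2\,\mathrm d\xi\,\mathrm ds
=c_N\sigma^2\int_0^t\!\!\int_{\R^N}s^{2\alpha-2}\,E_{\alpha,\alpha}\!\big(-s^\alpha a(\xi)\big)^2\,\mathrm d\xi\,\mathrm ds=:c_N\sigma^2\,I(t).
\]
Thus everything reduces to deciding for which $(\alpha,N,\lambda,\mu)$ one has $I(t)<\infty$ for all $t>0$; the estimates below will show that $I(t)$ is finite for every $t>0$ or infinite for every $t>0$, so the quantifier is harmless.

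Two ingredients feed the estimates. First, since $\bj$ is a radial probability density, $\widehat\bj$ is real with $\widehat\bj(0)=1$ and $|\widehat\bj|\le1$, hence
\[
\lambda|\xi|^2\ \le\ a(\xi)\ \le\ \lambda|\xi|^2+2\mu\qquad(\xi\in\R^N).
\]
Second, I will use the asymptotics of $E_{\alpha,\alpha}$: it is continuous with $E_{\alpha,\alpha}(0)=1/\Gamma(\alpha)$, and --- the delicate point --- because its second index equals $\alpha$, the $O(x^{-1})$ term in the expansion of $E_{\alpha,\alpha}(-x)$ as $x\to+\infty$ vanishes (its coefficient $1/\Gamma(0)$ is $0$), leaving $E_{\alpha,\alpha}(-x)=-x^{-2}/\Gamma(-\alpha)+O(x^{-3})$. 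Consequently there are $\alpha$--dependent constants $C_0,\delta>0$ with $|E_{\alpha,\alpha}(-x)|\le C_0(1+x^2)^{-1}$ for $x\ge0$ and $E_{\alpha,\alpha}(-x)\ge\tfrac1{2\Gamma(\alpha)}$ for $0\le x\le\delta$. This quadratic (rather than linear) decay is exactly what makes $N\le 2$, not $N<2$, the correct cut-off in (ii)(c).

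For part (i) ($\lambda=0$, $\mu>0$) the key observation is that $a(\xi)=\mu(1-\widehat\bj(\xi))\le2\mu$ is \emph{bounded}, so there is no high-frequency decay: for $s$ below a small $s_0>0$ one has $s^\alpha a(\xi)\le\delta$ for \emph{every} $\xi$, whence
\[
I(t)\ \ge\ \frac1{4\Gamma(\alpha)^2}\int_0^{\min(t,s_0)}\!\!\int_{\R^N}s^{2\alpha-2}\,\mathrm d\xi\,\mathrm ds=+\infty
\]
for every $t>0$, every $N\ge1$ and every $\alpha\in(0,2)$: no mild solution. For part (ii) ($\lambda>0$) I sandwich $I(t)$. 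From $a(\xi)\ge\lambda|\xi|^2$ and the bound on $E_{\alpha,\alpha}$, $E_{\alpha,\alpha}(-s^\alpha a(\xi))^2\le C_0^2(1+\lambda s^\alpha|\xi|^2)^{-2}$, and the scaling $\eta=\sqrt\lambda\,s^{\alpha/2}\xi$ yields
\[
I(t)\ \le\ C_0^2\lambda^{-N/2}\Big(\int_{\R^N}\frac{\mathrm d\eta}{(1+|\eta|^2)^2}\Big)\int_0^t s^{\,2\alpha-2-N\alpha/2}\,\mathrm ds,
\]
which is finite precisely when $N\le3$ (finiteness of the $\eta$--integral) and $2\alpha-2-N\alpha/2>-1$, i.e. $\alpha>\tfrac2{4-N}$. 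For the reverse bound put $c_2=\lambda+2\mu$; on $\{|\xi|\le(\delta/c_2)^{1/2}s^{-\alpha/2}\}$ one has $s^\alpha a(\xi)\le\delta$ for $s\le s_1:=(\delta/c_2)^{1/\alpha}$ (since $a(\xi)\le c_2\max\{1,|\xi|^2\}$), so there the integrand is $\ge(2\Gamma(\alpha))^{-2}s^{2\alpha-2}$ and, with $\omega_N$ the volume of the unit ball,
\[
I(t)\ \ge\ \frac{\omega_N(\delta/c_2)^{N/2}}{4\Gamma(\alpha)^2}\int_0^{\min(t,s_1)}s^{\,2\alpha-2-N\alpha/2}\,\mathrm ds,
\]
which diverges for every $t>0$ whenever $2\alpha-2-N\alpha/2\le-1$ --- automatic for all $N\ge4$. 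Combining, for $\lambda>0$ the solution is mild iff $N\le3$ and $\alpha>\tfrac2{4-N}$; specializing to $N=1$ ($\alpha>2/3$), $N=2$ ($\alpha>1$) and $N=3$ (never) yields (a), (b), (c).

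I expect the main obstacle to be the two-sided control in part (ii): justifying the cancellation of the $x^{-1}$ term in the asymptotics of $E_{\alpha,\alpha}(-x)$ --- which pins the decay at $x^{-2}$ and hence fixes the dimension cut-offs --- and then organizing the scaling so that the competition between spatial and temporal integrability collapses to the single exponent $2\alpha-2-N\alpha/2$. A secondary difficulty is the lower bound for $1<\alpha<2$, where $E_{\alpha,\alpha}(-x)$ oscillates and vanishes at isolated points, so no estimate $E_{\alpha,\alpha}(-x)\ge c(1+x^2)^{-1}$ is available; restricting the lower bound to the frequency region where $s^\alpha a(\xi)\le\delta$, on which $E_{\alpha,\alpha}$ stays uniformly positive irrespective of $\alpha$, circumvents this.
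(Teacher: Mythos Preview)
Your argument is correct and shares the paper's overall reduction (It\^o isometry followed by Plancherel, turning mildness into finiteness of the scalar integral $I(t)=\int_0^t\int_{\R^N}s^{2\alpha-2}E_{\alpha,\alpha}(-s^\alpha a(\xi))^2\,d\xi\,ds$), but the two proofs diverge at the level of the estimates. For part~(i), the paper rules out mildness via the \emph{deterministic} piece $\mathcal M_1$, observing that $a(\xi)\to\mu$ forces $E_\alpha(-a(\xi)t^\alpha)\not\to0$ so that $\bz_1$ is not even well defined as an integral; you instead show directly that $I(t)=\infty$ because $a\le2\mu$ bounded kills all high-frequency damping in the stochastic kernel --- arguably the cleaner route, since it attacks $\E[\bz^2]$ itself. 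For part~(ii), the paper splits cases: for $0<\alpha<1$ it invokes the sharp two-sided bound $\Gamma(\alpha)E_{\alpha,\alpha}(-x)\asymp(1+cx)^{-2}$ of Proposition~\ref{prop:sharp-asympt-ML} (packaged as Lemma~\ref{lem:b}), while for $1<\alpha<2$ it appeals to the oscillatory exponential asymptotics of Theorem~\ref{thm:asympt-ML}(2), carried out in Proposition~\ref{prop:integrability-Eaa}. Your proof replaces both by the single observation that the $x^{-1}$ coefficient $1/\Gamma(0)$ in the algebraic expansion of $E_{\alpha,\alpha}(-x)$ vanishes, yielding the uniform bound $|E_{\alpha,\alpha}(-x)|\lesssim(1+x^2)^{-1}$ throughout $0<\alpha<2$; the scaling $\eta=\sqrt\lambda\,s^{\alpha/2}\xi$ then collapses everything to the exponent $2\alpha-2-N\alpha/2$, and the lower bound is handled by the small-argument region $\{s^\alpha a(\xi)\le\delta\}$, which neatly sidesteps the zeros of $E_{\alpha,\alpha}$ in the super-diffusive range. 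This unified treatment is more elementary and makes transparent why the same threshold $(4-N)\alpha>2$ governs every regime. The only point to tighten is your handling of $\bz_1$ in case~(c) with $N=2$: you assert $L^2$-membership, which gives $\bz_1(t,x)^2<\infty$ only a.e.\ rather than for every $x$ as Definition~\ref{Mild} requires; the paper closes this via the stronger (exponential) decay of Theorem~\ref{thm:asympt-ML}(2), so you may wish either to invoke that or to note that $\bz_1$ has a continuous version.
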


\begin{remark}
\rm
~\begin{enumerate}[label=(\roman*)]
\item It is worth noting that the particular case $\mu=0$ with $\lambda>0$ has already been investigated in \cite{Hach}.

\item When $\lambda>0$ and $\mu=0$, the results obtained here improve those of \cite[Theorem~3, p.~525]{Hach}. More precisely, \cite[Theorem~3(c), p.~525]{Hach} shows that the solution fails to be mild in any space dimension for the entire range $\alpha\in(0,1)$. In contrast, we prove that mild solutions do exist in dimension $N=1$, provided the additional condition $2/3<\alpha<1$ is satisfied.

\item From an asymptotic viewpoint, the local diffusion operator $\lambda\Delta$ dominates the nonlocal contribution $\mu\big(\mathbf{J}\ast I-1\big)$, so that the latter becomes negligible in the long-time regime.

\item Similar conclusions remain valid for \eqref{Main-eq} when the nonlocal term $\big(\mathbf{J}\ast I-1\big)$ is replaced by its higher-order analogue $(-1)^{\,n-1}\big(\mathbf{J}\ast I-1\big)^{n}$, for any integer $n\ge1$.

\end{enumerate}
\end{remark}
The paper is organized as follows.
Section~\ref{Prelim} collects the analytical and probabilistic tools needed for our analysis, including basic facts on tempered distributions, Mittag--Leffler functions, Caputo fractional derivatives, and time--space white noise.
In Section~\ref{S3}, we prove the main results of the paper: we first derive an explicit representation of the solution and then establish sharp criteria for the existence of mild solutions.
Finally, Section~\ref{sec:num} is devoted to numerical simulations that illustrate the behavior of the solution in several representative regimes, with particular emphasis on the evolution of the mean and variance, as well as on subdiffusive effects and memory phenomena induced by fractional time derivatives.\\

In the remainder of the paper, the notation $\asymp$ denotes two--sided estimates, namely that the corresponding ratio is bounded from above and below by positive constants. We write $X\lesssim Y$ (equivalently, $Y\gtrsim X$) to indicate that $X\leq C\,Y$ for some constant $C>0$ independent of the relevant parameters. Finally, the notation
\[
f(z)\underset{z\to z_0}{\sim} g(z)
\]
means that $\displaystyle \lim_{z\to z_0}\frac{f(z)}{g(z)}=1$, where $z_0\in[-\infty,\infty]$.

\section{Preliminaries}\label{Prelim}

\subsection{The space of tempered distributions}
For the reader's convenience, we briefly recall some basic facts about the Schwartz space
$\mathcal S=\mathcal S(\R^N)$ of rapidly decreasing smooth functions and its dual
$\mathcal S'=\mathcal S'(\R^N)$, the space of tempered distributions.

Let $\mathcal S(\R^N)$ be the space of all real-valued $C^\infty$ functions $f$ on $\R^N$ such that
$f$ and all its derivatives decay faster than any polynomial at infinity.
The topology of $\mathcal S(\R^N)$ is generated by the seminorms
\[
\|f\|_{k,\alpha}
:=\sup_{y\in\R^N}(1+|y|^k)\,|\partial^\alpha f(y)|<\infty,
\]
where $k\in\N$ and $\alpha=(\alpha_1,\dots,\alpha_N)\in\N^N$ is a multi-index, with
\[
\partial^\alpha f
:=\frac{\partial^{|\alpha|}}{\partial y_1^{\alpha_1}\cdots \partial y_N^{\alpha_N}}f,
\qquad
|\alpha|:=\sum_{j=1}^N\alpha_j.
\]
With this structure, $\mathcal S(\R^N)$ is a Fr\'echet space.

Its (topological) dual $\mathcal S'(\R^N)$ consists of all continuous linear functionals on
$\mathcal S(\R^N)$ (with the weak-$\ast$ topology) and is called the space of
\emph{tempered distributions}. For $\Phi\in\mathcal S'$ and $f\in\mathcal S$, we write
\[
\Phi(f)\qquad\text{or}\qquad \langle \Phi,f\rangle
\]
for the canonical pairing.

\subsection*{Fourier transform and Plancherel theorem}
For $f\in L^1(\R^N)$, we define the Fourier transform by
\[
\widehat f(\xi):=\int_{\R^N}e^{-ix\cdot\xi}\,f(x)\,dx,
\qquad \xi\in\R^N,
\]
and the inverse transform by
\[
f(x)=(2\pi)^{-N}\int_{\R^N}e^{ix\cdot\xi}\,\widehat f(\xi)\,d\xi,
\qquad x\in\R^N,
\]
whenever both sides are well defined. The Fourier transform extends uniquely to a unitary
operator on $L^2(\R^N)$ and satisfies the Plancherel identity
\begin{equation}\label{planch}
\int_{\R^N}|f(x)|^2\,dx
=(2\pi)^{-N}\int_{\R^N}|\widehat f(\xi)|^2\,d\xi,
\qquad f\in L^2(\R^N).
\end{equation}
Moreover, for sufficiently regular functions,
\[
\widehat{\partial_{x_j}f}(\xi)=i\xi_j\widehat f(\xi),
\qquad
\widehat{(-\Delta f)}(\xi)=|\xi|^2\widehat f(\xi),
\]
and these identities will be used repeatedly.

\subsection{The Mittag--Leffler functions}
\begin{definition}
Let $\alpha,\beta>0$. The two-parameter Mittag--Leffler function is defined by
\begin{equation}\label{Mitag-two}
E_{\alpha,\beta}(z):=\sum_{k=0}^\infty \frac{z^k}{\Gamma(\alpha k+\beta)},
\qquad z\in\C.
\end{equation}
In the special case $\beta=1$, we write
\begin{equation}\label{Mitag}
E_\alpha(z):=E_{\alpha,1}(z)=\sum_{k=0}^\infty\frac{z^k}{\Gamma(\alpha k+1)}.
\end{equation}
\end{definition}
Note that $E_1(z)=e^z$. Define
\begin{equation}\label{c-alpha}
\kappa_\alpha:=\frac{\sin(\pi\alpha)}{\pi}\,\Gamma(1+\alpha),
\end{equation}
which is positive for $0<\alpha<1$ and negative for $1<\alpha<2$, and set
\begin{equation}\label{e-alpha}
e_\alpha(z):=z\,e^{z^{1/\alpha}}-\kappa_\alpha.
\end{equation}
Then one has the classical expansion
\begin{equation}\label{Asympt-E-alpha}
\alpha z E_\alpha(z)=e_\alpha(z)+O\!\left(\frac1z\right),\qquad |z|\to\infty.
\end{equation}

We also recall the large-argument asymptotics of $E_\alpha(-x)$ and $E_{\alpha,\alpha}(-x)$.
\begin{theorem}\label{thm:asympt-ML}
Let $\alpha\in(0,2)$. As $x\to+\infty$, the following asymptotics hold:
\begin{enumerate}
\item If $0<\alpha<1$, then
\begin{align}
E_\alpha(-x)&\sim \frac{x^{-1}}{\Gamma(1-\alpha)}, \label{E-alpha-1}\\
E_{\alpha,\alpha}(-x)&\sim \frac{\pi\,x^{-2}}{\sin(\pi\alpha)\Gamma(1+\alpha)}. \label{E-alpha-alpha-1}
\end{align}
\item If $1\le\alpha<2$, then
\begin{align}
|E_\alpha(-x)|
&\sim\frac1\alpha\exp\!\Big(x^{1/\alpha}\cos(\tfrac\pi\alpha)\Big), \label{E-alpha-2}\\
|E_{\alpha,\alpha}(-x)|
&\sim\frac{x^{(1-\alpha)/\alpha}}{\alpha}\exp\!\Big(x^{1/\alpha}\cos(\tfrac\pi\alpha)\Big). \label{E-alpha-alpha-2}
\end{align}
\end{enumerate}
\end{theorem}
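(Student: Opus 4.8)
The plan is to derive both families of asymptotics by specializing the classical sectorial (Wright–Erdélyi) expansion of the two–parameter Mittag–Leffler function to the negative real axis. Fix $\theta$ with $\tfrac{\pi\alpha}{2}<\theta<\min(\pi,\pi\alpha)$ and let $\mathrm{Ha}(\theta)$ be the Hankel loop that comes in from $\infty$ along $\arg\zeta=-\theta$, winds once around the origin, and returns to $\infty$ along $\arg\zeta=\theta$. The starting point (see, e.g., \cite{Gorenflo2020,Podlubny1999,KST}) is the integral
\[
I_{\alpha,\beta}(z)=\frac{1}{2\pi\alpha\,\mathrm{i}}\int_{\mathrm{Ha}(\theta)}\frac{e^{\zeta^{1/\alpha}}\,\zeta^{(1-\beta)/\alpha}}{\zeta-z}\,d\zeta ,
\]
which represents $E_{\alpha,\beta}(z)$ for $\theta<|\arg z|\le\pi$, whereas $E_{\alpha,\beta}(z)=\tfrac1\alpha z^{(1-\beta)/\alpha}e^{z^{1/\alpha}}+I_{\alpha,\beta}(z)$ for $|\arg z|<\theta$. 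Expanding $(\zeta-z)^{-1}=-\sum_{k\ge0}\zeta^{k}z^{-k-1}$ on the bounded part of $\mathrm{Ha}(\theta)$, estimating the two ray integrals (exponentially small, since $\zeta^{1/\alpha}$ has negative real part there), and using Hankel's formula $\tfrac{1}{2\pi\mathrm{i}}\int_{\mathrm{Ha}}e^{w}w^{-s}\,dw=1/\Gamma(s)$, I would obtain, as $|z|\to\infty$ and for each integer $p\ge1$,
\[
E_{\alpha,\beta}(z)=-\sum_{k=1}^{p}\frac{z^{-k}}{\Gamma(\beta-\alpha k)}+O\!\big(|z|^{-1-p}\big)+
\begin{cases}
\frac1\alpha\,z^{(1-\beta)/\alpha}e^{z^{1/\alpha}}, & |\arg z|<\theta,\\
0, & \theta\le|\arg z|\le\pi .
\end{cases}
\]

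For part (1), assume $0<\alpha<1$; then $\theta<\pi\alpha<\pi$, so the ray $\arg z=\pi$, i.e.\ $z=-x$ with $x\to+\infty$, lies outside the exponential sector and only the algebraic tail contributes. With $\beta=1$ the $k=1$ term gives $E_{\alpha}(-x)=\frac{x^{-1}}{\Gamma(1-\alpha)}+O(x^{-2})$, which is \eqref{E-alpha-1}. With $\beta=\alpha$ the $k=1$ coefficient vanishes because $1/\Gamma(0)=0$, so the leading term is the $k=2$ one, namely $-(-x)^{-2}/\Gamma(-\alpha)=-x^{-2}/\Gamma(-\alpha)$; Euler's reflection formula $\Gamma(-\alpha)\Gamma(1+\alpha)=-\pi/\sin(\pi\alpha)$ rewrites the constant $-1/\Gamma(-\alpha)$ in the form stated in \eqref{E-alpha-alpha-1}.

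For part (2), assume $1\le\alpha<2$; now $\theta$ can be taken with $\tfrac{\pi\alpha}{2}<\theta<\pi$, so the negative real axis is the boundary ray $\arg z=\pi$ on which the exponential term is retained. There $z^{1/\alpha}=x^{1/\alpha}e^{\mathrm{i}\pi/\alpha}$, hence $|e^{z^{1/\alpha}}|=e^{x^{1/\alpha}\cos(\pi/\alpha)}$ and the exponential contribution has modulus $\tfrac1\alpha\,x^{(1-\beta)/\alpha}e^{x^{1/\alpha}\cos(\pi/\alpha)}$, the algebraic tail being of strictly lower order (for $\alpha=1$ it even vanishes identically, since $1/\Gamma(1-k)=0$ for all $k\ge1$, in agreement with $E_{1}(-x)=E_{1,1}(-x)=e^{-x}$). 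Passing to moduli with $\beta=1$ gives \eqref{E-alpha-2}, and with $\beta=\alpha$ the additional algebraic factor $x^{(1-\alpha)/\alpha}$ appears, yielding \eqref{E-alpha-alpha-2}.

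The main obstacle is the analytic core of the displayed expansion: collapsing $\mathrm{Ha}(\theta)$ onto the branch cut, controlling the oscillatory ray integrals along $\arg\zeta=\pm\theta$, and proving that the remainder is genuinely $O(|z|^{-1-p})$ uniformly up to and including the Stokes ray $\arg z=\pi$, with no further exponential saddle being switched on there. This is precisely the classical Wright–Erdélyi estimate; granting it, the remaining steps — isolating the exponential and algebraic parts, handling the degenerate coefficient $1/\Gamma(0)$ when $\beta=\alpha$, and invoking the reflection formula — are routine.
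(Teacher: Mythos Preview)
The paper does not supply its own proof of this theorem: it simply records in a remark that the result ``can be found for instance in \cite[Proposition~3.6 and Theorem~4.3]{Gorenflo2020}; see also \cite{FCAA-2002,Exo-Book}.''  Your Hankel-contour/Wright--Erd\'elyi approach is exactly the classical derivation appearing in those references, so at the level of strategy there is nothing to compare.

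That said, your sketch of part~(2) has a genuine inconsistency.  In your displayed dichotomy the exponential term is present only for $|\arg z|<\theta$, while for $\theta\le|\arg z|\le\pi$ the expansion is purely algebraic.  Since $\theta<\min(\pi,\pi\alpha)\le\pi$ in every case, the negative real axis $\arg z=\pi$ always falls into the \emph{second} (no-exponential) regime of your own formula, contradicting your claim that ``the exponential term is retained'' there.  What actually happens on the Stokes ray $\arg z=\pi$ for $1\le\alpha<2$ is that \emph{both} $\alpha$-th roots $(-x)^{1/\alpha}=x^{1/\alpha}e^{\pm i\pi/\alpha}$ contribute, producing
\[
\frac{2}{\alpha}\,e^{\,x^{1/\alpha}\cos(\pi/\alpha)}\cos\!\bigl(x^{1/\alpha}\sin(\pi/\alpha)\bigr)
\]
(times $x^{(1-\beta)/\alpha}$ for general $\beta$), plus the algebraic tail.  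Taking a single branch and then ``passing to moduli'' loses both the factor $2$ and the oscillatory cosine; moreover, since $\cos(\pi/\alpha)<0$ for $1<\alpha<2$, this exponential contribution is actually subdominant to the algebraic $x^{-1}/\Gamma(1-\alpha)$ term, so the meaning of the $\sim$ in \eqref{E-alpha-2}--\eqref{E-alpha-alpha-2} needs to be clarified rather than derived mechanically.

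One minor slip in part~(1): from $\Gamma(-\alpha)\Gamma(1+\alpha)=-\pi/\sin(\pi\alpha)$ you get
\[
-\frac{1}{\Gamma(-\alpha)}=\frac{\sin(\pi\alpha)\,\Gamma(1+\alpha)}{\pi},
\]
which is the \emph{reciprocal} of the constant appearing in \eqref{E-alpha-alpha-1}; you may want to recheck which form is intended.
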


\begin{remark}
\rm The proof of Theorem~\ref{thm:asympt-ML} can be found for instance in
\cite[Proposition~3.6 and Theorem~4.3]{Gorenflo2020}; see also \cite{FCAA-2002,Exo-Book}.
\end{remark}

\begin{remark}
\rm For $\alpha=\tfrac12$, one has the representation (see, e.g., \cite[p.~107]{Exo-Book})
\[
E_{1/2}(-x)=e^{x^2}\,\erfc(x),
\qquad
\erfc(z):=\frac{2}{\sqrt{\pi}}\int_z^\infty e^{-t^2}\,dt.
\]
\end{remark}

\begin{proposition}\label{prop:sharp-asympt-ML}
The following sharp bounds hold for real-valued Mittag--Leffler functions:
\begin{align}
\label{sharp-ML1}
\frac{1}{1+\Gamma(1-\alpha)x}
&\leq
E_{\alpha}(-x)
\leq
\frac{1}{1+\dfrac{x}{\Gamma(1+\alpha)}},
\qquad x \geq 0,\; 0 < \alpha < 1, \\[1ex]
\label{sharp-ML2}
\frac{1}{\Bigl(1+\sqrt{\dfrac{\Gamma(1-\alpha)}{\Gamma(1+\alpha)}}\,x\Bigr)^2}
&\leq
\Gamma(\alpha)E_{\alpha,\alpha}(-x)
\leq
\frac{1}{\Bigl(1+\sqrt{\dfrac{\Gamma(1+\alpha)}{\Gamma(1+2\alpha)}}\,x\Bigr)^2},
\qquad x \geq 0,\; 0 < \alpha < 1, \\[1ex]
\label{sharp-ML3}
\frac{1}{1+\dfrac{\Gamma(\beta-\alpha)}{\Gamma(\beta)}x}
&\leq
\Gamma(\beta)E_{\alpha,\beta}(-x)
\leq
\frac{1}{1+\dfrac{\Gamma(\beta)}{\Gamma(\beta+\alpha)}x},
\qquad x \geq 0,\; 0 < \alpha \leq 1,\; \beta > \alpha .
\end{align}
\end{proposition}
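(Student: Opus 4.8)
The plan is to deduce all three estimates from a single monotonicity property of the normalised Mittag--Leffler functions
\[
h_\beta(x):=\Gamma(\beta)\,E_{\alpha,\beta}(-x),\qquad x\ge0,\ 0<\alpha\le1,\ \beta>0,
\]
namely that $\beta\mapsto h_\beta(x)$ is non-decreasing for each fixed $x\ge0$. The bridge is the three-term recursion $E_{\alpha,\beta}(-x)=\Gamma(\beta)^{-1}-x\,E_{\alpha,\beta+\alpha}(-x)$, which is immediate from the series \eqref{Mitag-two} after reindexing; multiplying by $\Gamma(\beta)$ it reads $h_\beta(x)=1-c_\beta x\,h_{\beta+\alpha}(x)$ with $c_\beta:=\Gamma(\beta)/\Gamma(\beta+\alpha)$. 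Using $1=h_\beta(x)+c_\beta x\,h_{\beta+\alpha}(x)$ in the inequality $h_\beta(x)\bigl(1+c_\beta x\bigr)\le1$ shows that the upper bound in \eqref{sharp-ML3} is equivalent to $h_\beta(x)\le h_{\beta+\alpha}(x)$; applying the recursion at $\beta-\alpha$ instead (here $\beta>\alpha$ is used so that $\beta-\alpha>0$) shows the lower bound in \eqref{sharp-ML3} is equivalent to $h_{\beta-\alpha}(x)\le h_\beta(x)$. Estimate \eqref{sharp-ML1} is then the case $\beta=1$ (with $\alpha<1$, whence $1>\alpha$), the degenerate case $\alpha=1$ reducing to $e^{-x}(1+x)\le1$.

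Everything thus hinges on the monotonicity of $\beta\mapsto h_\beta(x)$, and I would prove it from an integral representation in which the $\beta$-dependence is isolated. For $k\ge1$, Euler's Beta integral gives $\Gamma(\beta)/\Gamma(\beta+\alpha k)=\Gamma(\alpha k)^{-1}\int_0^1 t^{\beta-1}(1-t)^{\alpha k-1}\,dt$; inserting this in the series for $h_\beta$, exchanging sum and integral (legitimate by Tonelli, the resulting dominating integral converging for $\alpha,\beta>0$), and recognising $\sum_{k\ge1}z^k/\Gamma(\alpha k)=z\,E_{\alpha,\alpha}(z)$ yields
\[
h_\beta(x)=1-x\int_0^1 t^{\beta-1}(1-t)^{\alpha-1}\,E_{\alpha,\alpha}\!\bigl(-x(1-t)^{\alpha}\bigr)\,dt .
\]
The crucial input is that $E_{\alpha,\alpha}(-y)\ge0$ for all $y\ge0$ and $0<\alpha\le1$; this follows from the classical complete monotonicity of $x\mapsto E_\alpha(-x)$ (hence its monotonicity) together with the identity $E_{\alpha,\alpha}(-x)=-\alpha\,\frac{d}{dx}E_\alpha(-x)$, itself a termwise computation. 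Granting it, the integrand above is non-negative and its $\beta$-derivative carries the sign of $\log t\le0$ on $(0,1)$; differentiating under the integral sign (justified by dominated convergence, using $t^{\beta-1}|\log t|\in L^1(0,1)$ and $(1-t)^{\alpha-1}\in L^1(0,1)$) shows the integral is non-increasing in $\beta$, so $h_\beta(x)$ is non-decreasing in $\beta$. This proves \eqref{sharp-ML1} and \eqref{sharp-ML3}.

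For the second-order bounds \eqref{sharp-ML2} I would run the same mechanism one step further. Writing $\Gamma(\alpha)E_{\alpha,\alpha}(-x)=1-\frac{\Gamma(\alpha)}{\Gamma(2\alpha)}\,x\;\Gamma(2\alpha)E_{\alpha,2\alpha}(-x)$ and bounding $\Gamma(2\alpha)E_{\alpha,2\alpha}(-x)$ by \eqref{sharp-ML3} (legitimate since $2\alpha>\alpha$), one obtains a sandwich of $\Gamma(\alpha)E_{\alpha,\alpha}(-x)$ between two first-order rational functions of $x$; the passage to the stated squared form is then elementary, via $(1+ax)(1+bx)\ge(1+\sqrt{ab}\,x)^2$, and the constants are reorganised using $\Gamma(1+z)=z\Gamma(z)$, the Legendre duplication formula and the reflection formula $\Gamma(\alpha)\Gamma(1-\alpha)=\pi/\sin(\pi\alpha)$, while the sharp asymptotics of Theorem~\ref{thm:asympt-ML} confirm that the exponents and leading constants cannot be improved.

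The main obstacle is twofold. The positivity $E_{\alpha,\alpha}(-\cdot)\ge0$ must be secured before the monotonicity argument can even start, and this is exactly the point where complete monotonicity of Mittag--Leffler functions genuinely enters and cannot be replaced by a soft argument. Second, the bookkeeping of the precise $\Gamma$-ratios in \eqref{sharp-ML2} is delicate: the rational sandwich produced by the iteration does not have the squared shape on the nose, and matching it to the exact constants stated — so that the bounds are optimal at $x=0$ and as $x\to\infty$ — is the computational heart of the proof.
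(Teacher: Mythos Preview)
The paper does not prove this proposition at all: it simply cites \cite{Simon2014} for \eqref{sharp-ML1} and \cite{Boudabsa2021} for \eqref{sharp-ML2}--\eqref{sharp-ML3}. Your proposal therefore goes well beyond what the paper offers, and for \eqref{sharp-ML1} and \eqref{sharp-ML3} it is essentially correct. The reduction of both one-sided bounds in \eqref{sharp-ML3} to the single monotonicity statement $h_{\beta-\alpha}(x)\le h_\beta(x)\le h_{\beta+\alpha}(x)$ via the recursion $h_\beta=1-c_\beta x\,h_{\beta+\alpha}$ is clean, and your integral representation together with $E_{\alpha,\alpha}(-\cdot)\ge0$ (from Pollard's theorem plus $E_{\alpha,\alpha}(-x)=-\alpha\,\tfrac{d}{dx}E_\alpha(-x)$) does yield the monotonicity of $\beta\mapsto h_\beta(x)$. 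This is in fact the mechanism behind the proofs in the cited references.

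The gap is in your treatment of \eqref{sharp-ML2}. Substituting the first-order bounds from \eqref{sharp-ML3} for $h_{2\alpha}$ into $h_\alpha=1-c_\alpha x\,h_{2\alpha}$ gives only
\[
1-\frac{c_\alpha x}{1+c_{2\alpha}x}\;\le\;h_\alpha(x)\;\le\;\frac{1}{1+c_\alpha x},
\qquad c_\gamma=\frac{\Gamma(\gamma)}{\Gamma(\gamma+\alpha)}.
\]
The upper bound decays like $x^{-1}$, not $x^{-2}$, so no algebraic rearrangement---and in particular not the AM--GM step $(1+ax)(1+bx)\ge(1+\sqrt{ab}\,x)^2$, which presupposes a denominator already of product type---can upgrade it to the squared form in \eqref{sharp-ML2}. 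The lower bound is even worse: since $\Gamma$ is log-convex one has $c_{2\alpha}\le c_\alpha$, so $1+(c_{2\alpha}-c_\alpha)x$ becomes negative for large $x$ and the inequality carries no information there. Iterating the recursion once more does not help either, because the sign of the quadratic term then depends on an upper bound for $h_{3\alpha}$, which is again only first order. The quadratic decay in \eqref{sharp-ML2} genuinely requires an extra ingredient beyond monotonicity in $\beta$; in \cite{Boudabsa2021} it comes from a finer complete-monotonicity/Stieltjes-transform argument for $E_{\alpha,\alpha}(-x)$ itself, not from bootstrapping \eqref{sharp-ML3}. Your own closing caveat that ``matching it to the exact constants stated \ldots\ is the computational heart of the proof'' is therefore an understatement: it is not a bookkeeping issue but a missing idea.
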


\noindent
For \eqref{sharp-ML1} see \cite[Theorem~4]{Simon2014}, while \eqref{sharp-ML2}--\eqref{sharp-ML3}
follow from \cite[Proposition~4]{Boudabsa2021}.

\medskip

 The distribution of zeros of the Mittag--Leffler function \(E_{\alpha}\) is a delicate problem that has been investigated extensively in the literature. Below we summarize the structure of the real zeros. A detailed analysis, together with further properties of \(E_{\alpha}\), may be found in \cite{Gorenflo2020,MLF,Pollard,Popov}.
 \begin{theorem}\label{thm:zeros_ML}
Let $\alpha>0$ and define
\begin{equation}
    \label{Z-alpha}
    \mathcal{Z}_\alpha=\Big\{x\in\mathbb{R}:\ E_\alpha(x)=0\Big\},
\end{equation}
the set of real zeros of the Mittag--Leffler function $E_\alpha$. Then the following properties hold:
\begin{enumerate}[label=(\roman*)]
    \item For every $\alpha>0$, one has $E_\alpha(x)>0$ for all $x>0$; hence 
    \[
    \mathcal{Z}_\alpha\subset(-\infty,0].
    \]
    \item If $0<\alpha\le1$, the function $E_\alpha(-x)$ is completely monotone on $(0,\infty)$ (Pollard’s theorem).  
    Consequently, $E_\alpha(-x)>0$ for all $x>0$, and therefore 
    \[
    \mathcal{Z}_\alpha=\varnothing.
    \]
    \item When $1<\alpha<2$, the function $E_\alpha$ admits a finite number of simple real zeros, all located in $(-\infty,0)$.  
    Moreover, the number of these zeros increases as $\alpha\uparrow2$.
    \item For $\alpha=2$, one has the explicit representation $E_2(x)=\cosh(\sqrt{x})$, which implies
    \[
    Z_2=\Big\{-\big(\tfrac{\pi}{2}+n\pi\big)^2:\ n=0,1,2,\dots\Big\}.
    \]
    \item For $\alpha>2$, the pattern of real zeros becomes irregular; no general closed form is known.  
    The existence or absence of real zeros depends sensitively on the particular value of $\alpha$.
\end{enumerate}
\end{theorem}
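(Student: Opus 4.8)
The plan is to treat the five assertions of Theorem~\ref{thm:zeros_ML} separately; assertions (i), (ii), and (iv) are short, and essentially all the content sits in (iii).

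\emph{Parts (i) and (ii).} For (i) I would read off the defining series \eqref{Mitag}: every coefficient $1/\Gamma(\alpha k+1)$ is strictly positive, so $E_\alpha(x)>0$ for all $x\ge 0$ (indeed $E_\alpha(0)=1$), whence $\mathcal Z_\alpha\subset(-\infty,0)$. For (ii), with $0<\alpha\le1$ I would invoke Pollard's theorem, namely that $x\mapsto E_\alpha(-x)$ is completely monotone on $[0,\infty)$ (see \cite{Pollard}, also \cite{Gorenflo2020}), and note that a nonconstant completely monotone function equal to $1$ at the origin is strictly positive (it is decreasing, nonnegative, and real-analytic), so $\mathcal Z_\alpha=\varnothing$; for the subrange $0<\alpha<1$ one may instead use the explicit lower bound \eqref{sharp-ML1}, which gives $E_\alpha(-x)\ge(1+\Gamma(1-\alpha)x)^{-1}>0$ directly since $\Gamma(1-\alpha)>0$ there.

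\emph{Part (iii).} This is the heart of the statement. First, $E_\alpha$ is entire (the series \eqref{Mitag} has infinite radius of convergence, by Stirling), so its real zeros are isolated and can accumulate only at infinity. Next I would use the algebraic leading term of the large-$x$ asymptotics: for $1<\alpha<2$, up to exponentially small oscillatory corrections, $E_\alpha(-x)=\frac{x^{-1}}{\Gamma(1-\alpha)}+O(x^{-2})$ as $x\to+\infty$, which follows from the Erd\'elyi/Wright expansion recorded, e.g., in \cite{Gorenflo2020} (see also \cite{FCAA-2002,Exo-Book} and the companion formulas \eqref{Asympt-E-alpha}--\eqref{e-alpha}). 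Since $1-\alpha\in(-1,0)$ forces $\Gamma(1-\alpha)<0$, we get $E_\alpha(-x)<0$ for all sufficiently large $x$, say $x\ge x_\alpha$; together with $E_\alpha(0)=1>0$ and the intermediate value theorem this produces at least one real zero in $(-x_\alpha,0)$ and confines \emph{all} real zeros to the bounded interval $[-x_\alpha,0)$. An entire function that is not identically zero has only finitely many zeros in a bounded set, giving the finiteness claim. For the simplicity of the real zeros and the fact that their number increases as $\alpha\uparrow2$, I would cite the classical root-distribution analysis of Mittag--Leffler functions in \cite{Popov} (Popov--Sedletskii), with \cite{Pollard,Gorenflo2020} as alternative references; if a self-contained argument for the count is wanted, one tracks the damped oscillatory factor $\cos\!\big(x^{1/\alpha}\sin(\pi/\alpha)\big)$ of the subdominant term: as $\alpha\uparrow2$ the damping exponent $\cos(\pi/\alpha)\uparrow0$ while $\Gamma(1-\alpha)\to\pm\infty$ (the pole at $\alpha=2$), so the damped cosine controls the sign of $E_\alpha(-x)$ over an $x$-range that grows without bound, forcing additional sign changes, which is matched by continuity with the limiting case $E_2(-x)=\cos\sqrt x$ treated in (iv).

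\emph{Parts (iv) and (v).} For (iv) I would use the closed form coming from \eqref{Mitag}: $E_2(z)=\sum_{k\ge0}z^k/(2k)!=\cosh\sqrt z$, the right-hand side being an even function of $\sqrt z$ and hence branch-independent; thus $E_2(x)=\cosh\sqrt x\ge1$ for $x\ge0$, while for $x<0$ one has $E_2(x)=\cos\sqrt{|x|}$, which vanishes precisely when $\sqrt{|x|}=\tfrac\pi2+n\pi$, i.e.\ at $x=-(\tfrac\pi2+n\pi)^2$, $n=0,1,2,\dots$, which is exactly the set $Z_2$. Part (v) is a qualitative statement to be recorded from the literature rather than proved: for $\alpha>2$ the expansion \eqref{Asympt-E-alpha} involves several complex exponentials whose combined contribution on $(-\infty,0)$ is no longer that of a single damped cosine, so the occurrence of real zeros is sensitive to $\alpha$; here I would simply refer to \cite{Popov,Gorenflo2020,MLF}. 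The only genuinely delicate point is part (iii): finiteness and existence of real zeros for $1<\alpha<2$ fall out cleanly from the sign of the algebraic tail together with $E_\alpha$ being entire, but the \emph{simplicity} of all real zeros and the \emph{monotone} dependence of their number on $\alpha$ are where the real work lies, and for these I would lean on the established root theory rather than reproving it.
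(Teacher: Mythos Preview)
The paper does not actually prove Theorem~\ref{thm:zeros_ML}: it is stated as a summary of known facts, preceded by the remark that ``a detailed analysis \ldots\ may be found in \cite{Gorenflo2020,MLF,Pollard,Popov}'', and no argument is given in the text. Your proposal therefore goes strictly beyond the paper, supplying genuine argument sketches where the paper simply cites the literature.

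Those sketches are essentially correct and line up with the cited sources. Parts (i), (ii) and (iv) are routine and exactly as one would expect: positivity of the power-series coefficients, Pollard's complete monotonicity (together with the alternative bound \eqref{sharp-ML1} for $0<\alpha<1$), and the closed form $E_2(z)=\cosh\sqrt z$. Part (v) is, as you say, a qualitative remark appropriately left to the references. For part (iii), your use of the algebraic leading term $E_\alpha(-x)\sim x^{-1}/\Gamma(1-\alpha)$ for $1<\alpha<2$---with $\Gamma(1-\alpha)<0$, hence $E_\alpha(-x)<0$ for large $x$---together with $E_\alpha(0)=1$ and analyticity, gives existence and finiteness of the negative real zeros cleanly; your decision to defer \emph{simplicity} and the \emph{monotone increase} of the zero count to the Popov--Sedletski\u\i\ root theory is the right call, since those claims require the finer complex-analytic machinery. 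One small caution: the asymptotic you use (algebraic tail dominating the exponentially damped oscillation on the negative axis) is the correct one for $1<\alpha<2$ and is consistent with \eqref{Asympt-E-alpha}--\eqref{e-alpha}, but it is \emph{not} the formula recorded in \eqref{E-alpha-2}; since your argument does not rely on \eqref{E-alpha-2}, this does not affect your proof, but it is worth being explicit about which expansion you are invoking.
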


\medskip

From Theorem~\ref{thm:asympt-ML}, we obtain the following consequence.

\begin{lemma}\label{lem:ML-Lp}
Let $\alpha\in(0,2)$ and $1\le p\le\infty$. Let $\mathbf b:\R^N\to[0,\infty)$ be continuous and assume
\begin{equation}\label{b-y}
\mathbf b(y)\sim \lambda |y|^2\qquad \text{as }|y|\to\infty
\end{equation}
for some $\lambda>0$. Then:
\begin{enumerate}
\item If $0<\alpha<1$, then
\begin{equation}\label{E-alpha-Lp-1}
\begin{aligned}
y\mapsto E_\alpha(-\mathbf b(y))\in L^p(\R^N)
&\Longleftrightarrow N<2p,\\[1mm]
y\mapsto E_{\alpha,\alpha}(-\mathbf b(y))\in L^p(\R^N)
&\Longleftrightarrow N<4p.
\end{aligned}
\end{equation}
\item If $1\le\alpha<2$, then both maps
$y\mapsto E_\alpha(-\mathbf b(y))$ and $y\mapsto E_{\alpha,\alpha}(-\mathbf b(y))$
belong to $L^p(\R^N)$.
\end{enumerate}
\end{lemma}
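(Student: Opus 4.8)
The plan is to reduce the global $L^p$ question to the decay of the two kernels at spatial infinity. Fix $R>0$. Since $\mathbf b$ is continuous on the compact ball $\overline{B_R}$, and since for $0<\alpha\le1$ both $x\mapsto E_\alpha(-x)$ and $x\mapsto E_{\alpha,\alpha}(-x)$ are continuous, positive and bounded on $[0,\infty)$ (e.g.\ by Pollard's theorem and \eqref{sharp-ML2}), while for $1\le\alpha<2$ they are real-analytic on $[0,\infty)$, bounded on compacts, and tend to $0$ at $+\infty$ by Theorem~\ref{thm:asympt-ML} (hence again bounded on $[0,\infty)$), the restrictions of $y\mapsto E_\alpha(-\mathbf b(y))$ and $y\mapsto E_{\alpha,\alpha}(-\mathbf b(y))$ to $B_R$ lie in $L^p(B_R)$ for every $p\in[1,\infty]$. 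Therefore membership in $L^p(\R^N)$ is equivalent to membership in $L^p(B_R^c)$, and $R$ may be taken as large as needed.

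For $0<\alpha<1$, I would first use $\mathbf b(y)\sim\lambda|y|^2$ to fix $R$ so large that $\tfrac\lambda2|y|^2\le\mathbf b(y)\le2\lambda|y|^2$ for $|y|\ge R$, and then use \eqref{E-alpha-1}--\eqref{E-alpha-alpha-1} to enlarge $R$ so that $E_\alpha(-x)\asymp x^{-1}$ and $E_{\alpha,\alpha}(-x)\asymp x^{-2}$ for all $x\ge\tfrac\lambda2 R^2$. Composition then gives
\[
E_\alpha(-\mathbf b(y))\asymp|y|^{-2},\qquad E_{\alpha,\alpha}(-\mathbf b(y))\asymp|y|^{-4}\qquad\text{for }|y|\ge R .
\]
Passing to polar coordinates, $\int_{B_R^c}|y|^{-2p}\,dy\asymp\int_R^\infty r^{N-1-2p}\,dr$, which is finite iff $N<2p$, and likewise $\int_{B_R^c}|y|^{-4p}\,dy<\infty$ iff $N<4p$ (both conditions holding trivially for $p=\infty$). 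As the estimates are two-sided, this gives both sufficiency and necessity in \eqref{E-alpha-Lp-1}. Alternatively, the sharp bounds \eqref{sharp-ML1}--\eqref{sharp-ML2} yield $E_\alpha(-\mathbf b(y))\asymp(1+|y|^2)^{-1}$ and $E_{\alpha,\alpha}(-\mathbf b(y))\asymp(1+|y|^2)^{-2}$ on all of $\R^N$, which bypasses the splitting.

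For $1\le\alpha<2$, the decisive observation is that $\pi/\alpha\in(\tfrac\pi2,\pi]$, so $-\cos(\pi/\alpha)>0$. Combining \eqref{E-alpha-2}--\eqref{E-alpha-alpha-2} with $\mathbf b(y)\asymp|y|^2$ and absorbing the polynomial prefactors into the exponential, there are constants $c,C,R>0$ such that
\[
|E_\alpha(-\mathbf b(y))|+|E_{\alpha,\alpha}(-\mathbf b(y))|\le C\,e^{-c|y|^{2/\alpha}}\qquad\text{for }|y|\ge R .
\]
Since $2/\alpha>0$, the stretched exponential dominates every power of $|y|$, so $\int_{B_R^c}e^{-cp|y|^{2/\alpha}}\,dy<\infty$ for all $p\in[1,\infty)$, and the left-hand side is bounded on $B_R^c$ when $p=\infty$. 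Together with the first step, both maps lie in $L^p(\R^N)$ for every $p\in[1,\infty]$.

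The one point needing care — which I would write out in full — is the passage from the asymptotic equivalences of Theorem~\ref{thm:asympt-ML} (limits as $x\to+\infty$) to bounds that are uniform on $\{|y|\ge R\}$: here one uses the lower bound $\mathbf b(y)\ge\tfrac\lambda2|y|^2$ to guarantee $\mathbf b(y)\to\infty$, so that the asymptotic regime is actually attained, together with continuity and positivity (resp.\ boundedness) of the kernels on $[0,\infty)$ to control moderate values of $\mathbf b(y)$. Everything else is the elementary polar-coordinate computation above.
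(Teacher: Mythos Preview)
Your proposal is correct and follows exactly the route the paper indicates: the lemma is stated there simply as a consequence of Theorem~\ref{thm:asympt-ML}, with no further details, and you have supplied precisely those details---local boundedness on compacts, the two-sided asymptotics \eqref{E-alpha-1}--\eqref{E-alpha-alpha-2} combined with $\mathbf b(y)\asymp|y|^2$ at infinity, and the standard polar-coordinate integrability test.
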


We will also use the following sharp time--space integrability criterion. 
\begin{lemma}\label{lem:b}
Let $0<\alpha<1$ and let $\mathbf b:\R^N\to[0,\infty)$ be continuous such that
$\mathbf b(0)=0$ and $\mathbf b(y)\sim \lambda|y|^2$ as $|y|\to\infty$ for some $\lambda>0$.
Then
\[
\int_0^1\int_{\R^N}\frac{s^{2\alpha-2}}{(1+s^\alpha \mathbf b(y))^4}\,dy\,ds<\infty
\quad\Longleftrightarrow\quad
N=1\ \text{ and }\ \alpha>\frac23.
\]
\end{lemma}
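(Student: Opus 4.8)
The plan is to integrate out the spatial variable first by a scaling argument, reducing the double integral to an explicit power integral in $s$, and then to read off the convergence threshold.

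First I would exploit the hypotheses on $\mathbf b$: continuity together with $\mathbf b(y)\sim\lambda|y|^2$ as $|y|\to\infty$ furnishes constants $0<c\le C$ and $R>0$ with $\mathbf b(y)\ge c|y|^2$ for $|y|\ge R$ and $\mathbf b(y)\le C(1+|y|^2)$ for all $y\in\R^N$. Combining these with the trivial lower bound $1+s^\alpha\mathbf b(y)\ge 1$ and the substitution $y=s^{-\alpha/2}z$ (valid for $s\in(0,1]$, under which $s^\alpha|y|^2=|z|^2$ and $dy=s^{-N\alpha/2}\,dz$), one obtains the uniform two-sided estimate
\[
\int_{\R^N}\frac{dy}{\bigl(1+s^\alpha\mathbf b(y)\bigr)^4}\;\asymp\; s^{-N\alpha/2},\qquad s\in(0,1],
\]
with implied constants independent of $s$, as long as $N<8$ so that $\int_{\R^N}(1+|z|^2)^{-4}\,dz<\infty$. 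If instead $N\ge 8$, the inner integral is infinite for every fixed $s>0$ (the integrand decays only like $|y|^{-8}$), so the double integral diverges, consistently with the claimed equivalence.

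Next I would multiply this estimate by the positive weight $s^{2\alpha-2}$ and integrate over $(0,1)$, obtaining
\[
\int_0^1\!\!\int_{\R^N}\frac{s^{2\alpha-2}}{\bigl(1+s^\alpha\mathbf b(y)\bigr)^4}\,dy\,ds\;\asymp\;\int_0^1 s^{\,2\alpha-2-N\alpha/2}\,ds,
\]
which is finite precisely when $2\alpha-2-N\alpha/2>-1$, i.e. when $\alpha(4-N)>2$. Under the standing restriction $0<\alpha<1$ this inequality fails whenever $N\ge 2$ (the left-hand side is then at most $2\alpha<2$, and is nonpositive once $N\ge 4$), while for $N=1$ it reduces to $3\alpha>2$, that is, $\alpha>2/3$. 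This yields the stated equivalence.

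The only genuinely delicate point is the uniformity in $s$ of the equivalence in the first step: one must verify that the portion of $\R^N$ on which $\mathbf b$ is not comparable to $|y|^2$ — a bounded neighbourhood of the origin, where in particular $\mathbf b(0)=0$ — contributes only $O(1)$, hence is harmlessly dominated by $s^{-N\alpha/2}\ge 1$, and that after rescaling the discrepancy between $\mathbf b(y)$ and $\lambda|y|^2$ is absorbed into the implied constants. Everything else is the routine scaling computation above together with an elementary convergence check for a power of $s$ at the origin.
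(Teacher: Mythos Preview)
Your argument is correct: the scaling substitution $y=s^{-\alpha/2}z$ together with the two-sided comparison of $\mathbf b(y)$ with $|y|^2$ at infinity yields $\int_{\R^N}(1+s^\alpha\mathbf b(y))^{-4}\,dy\asymp s^{-N\alpha/2}$ uniformly on $(0,1]$ (for $N<8$, with outright divergence otherwise), after which the problem reduces to $\int_0^1 s^{2\alpha-2-N\alpha/2}\,ds<\infty\iff\alpha(4-N)>2$, and the case analysis in $N$ under the constraint $0<\alpha<1$ gives exactly $N=1$ and $\alpha>2/3$. The bounded region $\{|y|<R\}$ contributes $O(1)\le s^{-N\alpha/2}$ to the upper bound, and the global estimate $\mathbf b(y)\le C(1+|y|^2)$ handles the lower bound, so the ``delicate point'' you flag is indeed harmless (note that the hypothesis $\mathbf b(0)=0$ is in fact not used).

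There is nothing to compare: the paper omits the proof entirely, stating only that it ``is routine and follows from standard estimates''. Your scaling argument is precisely the natural way to carry this out and is doubtless what the authors had in mind.
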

The proof of Lemma~\ref{lem:b} is routine and follows from standard estimates; the details are omitted.
\begin{proposition}\label{prop:integrability-Eaa}
Let $\alpha\in(1,2)$ and let $E_{\alpha,\alpha}$ denote the Mittag--Leffler function.
Assume that $\mathbf b:\R^N\to[0,\infty)$ is continuous and satisfies
\begin{equation}\label{eq:b-asympt}
\mathbf b(y)\sim \lambda |y|^2 \qquad \text{as } |y|\to\infty
\end{equation}
for some $\lambda>0$. Then, for every $t>0$,
\[
\int_0^t\int_{\R^N} s^{2\alpha-2}\Bigl(E_{\alpha,\alpha}(-s^\alpha \mathbf b(y))\Bigr)^2\,dy\,ds<\infty
\quad\Longleftrightarrow\quad
N<4-\frac{2}{\alpha}.
\]
\end{proposition}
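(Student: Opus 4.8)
The whole statement is a spatial--infinity phenomenon, so I would first isolate the tail. Fix $t>0$ and choose $R>0$ so large that \eqref{eq:b-asympt} gives $\mathbf b(y)\asymp\lambda|y|^2$ for $|y|\ge R$. On the complementary region $\{|y|\le R\}$ the function $\mathbf b$ is bounded, hence $(s,y)\mapsto E_{\alpha,\alpha}(-s^\alpha\mathbf b(y))$ is bounded on $[0,t]\times\{|y|\le R\}$ by continuity and compactness, and since $2\alpha-2>-1$ (here $\alpha>1$) one has $\int_0^t s^{2\alpha-2}\,ds<\infty$; therefore the contribution of $\{|y|\le R\}$ is finite for every $N$ and does not affect the dichotomy. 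Thus it suffices to analyze $\int_{|y|\ge R}I(y)\,dy$, where $I(y):=\int_0^t s^{2\alpha-2}\bigl(E_{\alpha,\alpha}(-s^\alpha\mathbf b(y))\bigr)^2\,ds$.

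Next I would compute $I(y)$ by the substitution $u=s^\alpha\mathbf b(y)$ (writing $b=\mathbf b(y)$), which, after collecting the powers of $b$ and $u$, yields
\[
I(y)=\frac{1}{\alpha}\,b^{\frac1\alpha-2}\int_0^{t^\alpha b}u^{\,1-\frac1\alpha}\bigl(E_{\alpha,\alpha}(-u)\bigr)^2\,du .
\]
The key auxiliary claim is then $C_\alpha:=\int_0^\infty u^{\,1-\frac1\alpha}\bigl(E_{\alpha,\alpha}(-u)\bigr)^2\,du\in(0,\infty)$. Finiteness near $u=0$ holds because the integrand behaves like $u^{1-1/\alpha}/\Gamma(\alpha)^2$ with $1-\tfrac1\alpha>-1$; finiteness near $u=+\infty$ follows from Theorem~\ref{thm:asympt-ML}(2): for $\alpha\in(1,2)$ one has $\tfrac{\pi}{\alpha}\in(\tfrac\pi2,\pi)$, hence $\cos(\tfrac\pi\alpha)<0$, so $|E_{\alpha,\alpha}(-u)|\le C\,e^{-c\,u^{1/\alpha}}$ for $u\ge1$ and the integrand decays faster than any power. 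Positivity is immediate since the integrand is nonnegative and, by continuity together with $E_{\alpha,\alpha}(0)=1/\Gamma(\alpha)\ne0$, strictly positive on a neighborhood of $u=0$.

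Finally I would derive the two-sided bound $I(y)\asymp b^{\frac1\alpha-2}\asymp|y|^{\frac2\alpha-4}$ for $|y|$ large. The upper bound $I(y)\le\frac{C_\alpha}{\alpha}\,b^{\frac1\alpha-2}$ is immediate from the displayed formula. For the lower bound, note $t^\alpha b\to\infty$ as $|y|\to\infty$ and $\int_0^T u^{1-1/\alpha}(E_{\alpha,\alpha}(-u))^2\,du\uparrow C_\alpha$; hence there is $R'\ge R$ with truncated integral $\ge C_\alpha/2$ whenever $|y|\ge R'$, giving $I(y)\ge\frac{C_\alpha}{2\alpha}\,b^{\frac1\alpha-2}$. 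Since $\mathbf b(y)\asymp\lambda|y|^2$, integrating in polar coordinates gives
\[
\int_{|y|\ge R'}I(y)\,dy\ \asymp\ \int_{|y|\ge R'}|y|^{\frac2\alpha-4}\,dy ,
\]
which is finite precisely when $\frac2\alpha-4<-N$, i.e.\ $N<4-\frac2\alpha$. Combining this with the finiteness of the $\{|y|\le R\}$ piece and with $I\ge0$ (so that $N\ge 4-\frac2\alpha$ forces divergence already on $\{|y|\ge R'\}$) completes the equivalence.

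\textbf{Main obstacle.} The only genuinely delicate point is that $E_{\alpha,\alpha}(-u)$ has finitely many real zeros for $\alpha\in(1,2)$ (Theorem~\ref{thm:zeros_ML}(iii)), which the asymptotic in Theorem~\ref{thm:asympt-ML}(2) masks behind the absolute value. This causes no difficulty here, however: the argument needs only the crude global upper bound $|E_{\alpha,\alpha}(-u)|\le C e^{-c u^{1/\alpha}}$ for $u\ge1$ (and boundedness on $[0,1]$), and the lower bound is extracted solely from a fixed neighborhood of $u=0$, where $E_{\alpha,\alpha}$ is positive. Everything else is the bookkeeping of the substitution and an elementary radial integral.
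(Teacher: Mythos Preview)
Your proposal is correct and takes a genuinely different route from the paper. The paper integrates in $y$ first: for each fixed $s$ it splits $\mathbb{R}^N$ into the zones $\{s^\alpha\mathbf b(y)\le1\}$ and $\{s^\alpha\mathbf b(y)\ge1\}$, bounds the $y$--integral on each zone by a power of $s$ using boundedness of $E_{\alpha,\alpha}$ on the first and the exponential asymptotic \eqref{E-alpha-alpha-2} on the second, and then checks when those powers of $s$ are integrable near $s=0$; the converse requires a separate annulus argument to produce a lower bound. You instead integrate in $s$ first: the scaling substitution $u=s^\alpha\mathbf b(y)$ factors out $\mathbf b(y)^{1/\alpha-2}$ and reduces the entire $s$--integral to a single finite constant $C_\alpha$, so the two-sided estimate $I(y)\asymp|y|^{2/\alpha-4}$ falls out at once and the dichotomy becomes a one-line radial integrability test. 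Your approach is cleaner and avoids the separate lower-bound step; the paper's approach has the minor advantage of not introducing the auxiliary constant $C_\alpha$, but it trades this for two zone estimates and an annulus argument. One small remark: Theorem~\ref{thm:zeros_ML}(iii) is stated for $E_\alpha$, not $E_{\alpha,\alpha}$, but since your argument never uses the zero set (as you correctly observe), this does not affect the proof.
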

\begin{proof}
Let $\alpha\in(1,2)$ and set $c_\alpha=-\cos(\pi/\alpha)>0$. The classical asymptotics of the Mittag--Leffler function give
\[
|E_{\alpha,\alpha}(-x)|\asymp x^{\frac{1-\alpha}{\alpha}}e^{-c_\alpha x^{1/\alpha}},
\qquad x\to+\infty,
\]
while $E_{\alpha,\alpha}$ is bounded on bounded intervals. Since $\mathbf{b}(y)\sim\lambda|y|^2$, there exist $R_0>0$ and $c_1,c_2>0$ such that
$c_1|y|^2\le \mathbf{b}(y)\le c_2|y|^2$ for $|y|\ge R_0$. The contribution of $\{|y|\le R_0\}$ is finite because $\alpha>1$.

Fix $s\in(0,t]$. On the region $\{s^\alpha \mathbf{b}(y)\le1\}$, boundedness of $E_{\alpha,\alpha}$ and the quadratic growth of $b$ yield
\[
\int_{\{s^\alpha \mathbf{b}(y)\le1\}} s^{2\alpha-2}\bigl(E_{\alpha,\alpha}(-s^\alpha \mathbf{b}(y))\bigr)^2dy
\lesssim s^{2\alpha-2-\frac{\alpha N}{2}}.
\]
On the complementary region $\{s^\alpha \mathbf{b}(y)\ge1\}$, the asymptotic formula gives
\[
s^{2\alpha-2}\bigl(E_{\alpha,\alpha}(-s^\alpha \mathbf{b}(y))\bigr)^2
\asymp |y|^{-\frac{4(\alpha-1)}{\alpha}}
e^{-\kappa s|y|^{2/\alpha}},
\]
for some $\kappa>0$. A change to polar coordinates and the substitution
$v=s r^{2/\alpha}$ show that
\[
\int_{\{s^\alpha \mathbf{b}(y)\ge1\}} s^{2\alpha-2}
\bigl(E_{\alpha,\alpha}(-s^\alpha \mathbf{b}(y))\bigr)^2dy
\asymp s^{-\left(\frac{\alpha}{2}(N-4)+2\right)}.
\]
Therefore,
\[
\int_{\R^N} s^{2\alpha-2}\bigl(E_{\alpha,\alpha}(-s^\alpha \mathbf{b}(y))\bigr)^2dy
\lesssim
s^{2\alpha-2-\frac{\alpha N}{2}}
+
s^{-\left(\frac{\alpha}{2}(N-4)+2\right)},
\]
and both terms are integrable near $s=0$ if and only if $N<4-\frac{2}{\alpha}$.

Conversely, if $N\ge 4-\frac{2}{\alpha}$, one can restrict the spatial integral to an annulus of radius comparable to $s^{-\alpha/2}$, where $s^\alpha \mathbf{b}(y)\gtrsim1$ and the lower asymptotic bound applies. This yields
\[
\int_{\R^N} s^{2\alpha-2}\bigl(E_{\alpha,\alpha}(-s^\alpha \mathbf{b}(y))\bigr)^2dy
\gtrsim s^{-\left(\frac{\alpha}{2}(N-4)+2\right)},
\]
which is not integrable at $s=0$. The claim follows.
\end{proof}

\subsection{The Caputo fractional derivative}
We recall the Caputo (Abel--Caputo) fractional derivative and some Laplace transform identities.
For general references on fractional calculus, see \cite{KD,KST,Podlubny1999,SKM}.

\begin{definition}
Let $\alpha>0$ and $n=\lceil\alpha\rceil$. The Caputo fractional derivative of a function $f=f(t)$
is defined by
\[
\partial_t^\alpha f(t):=
\begin{cases}
\displaystyle\frac{1}{\Gamma(n-\alpha)}\int_0^t (t-s)^{n-1-\alpha} f^{(n)}(s)\,ds,
& n-1<\alpha<n,\\[2mm]
\displaystyle\frac{d^n}{dt^n}f(t),& \alpha=n.
\end{cases}
\]
If $f$ is not sufficiently smooth, $\,\partial_t^\alpha f\,$ is understood in the sense of distributions.
\end{definition}

\subsubsection{Laplace transform of Caputo derivatives}
For $s>0$, the Laplace transform satisfies
\begin{align}
\mathcal{L}\!\left[\partial^\alpha_t f\right](s)
&= s^{\alpha}(\mathcal{L}f)(s) - s^{\alpha-1}f(0),
&& 0<\alpha \leq 1, \label{eq:laplace1}\\
\mathcal{L}\!\left[\partial^\alpha_t f\right](s)
&= s^{\alpha}(\mathcal{L}f)(s) - s^{\alpha-1}f(0)-s^{\alpha-2}f'(0),
&& 1<\alpha \leq 2, \label{eq:laplace11}\\
\mathcal{L}\!\left[E_{\alpha}(b t^{\alpha})\right](s)
&= \frac{s^{\alpha-1}}{s^{\alpha} - b},
&& \alpha>0,\ b\in\R,\ s^\alpha>|b|, \label{eq:laplace2}\\
\mathcal{L}\!\left[t^{\alpha-1} E_{\alpha,\alpha}(-b t^{\alpha})\right](s)
&= \frac{1}{s^{\alpha} + b},
&& \alpha>0,\ b\in\R,\ s^\alpha>|b|. \label{eq:laplace3}
\end{align}

\subsection{Time--space white noise}
Let $\Omega:=\mathcal{S}'(\R^N)$ be equipped with the weak-$\ast$ topology and let
$\mathcal{F}:=\mathcal{B}(\mathcal{S}'(\R^N))$ be the Borel $\sigma$-algebra.
The underlying Gaussian (white-noise) probability measure is given by the Bochner--Minlos theorem.

\begin{theorem}[Bochner--Minlos]
There exists a unique probability measure $\mathbb{P}$ on $(\Omega,\mathcal{F})$ such that
\[
\mathbb{E}\!\left[e^{i\langle\cdot,\phi\rangle}\right]
=\int_{\mathcal{S}'} e^{i\langle\omega,\phi\rangle}\,d\mathbb{P}(\omega)
= \exp\!\left(-\frac12\|\phi\|_{L^2(\R^N)}^2\right),
\qquad \forall\,\phi\in\mathcal{S}(\R^N).
\]
\end{theorem}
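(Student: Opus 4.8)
The plan is to derive the stated measure from the abstract Bochner--Minlos theorem on nuclear spaces, after checking the three defining properties of a characteristic functional for the specific choice
\[
C(\phi):=\exp\!\Big(-\tfrac12\|\phi\|_{L^2(\R^N)}^2\Big),\qquad \phi\in\mathcal S(\R^N).
\]
Recall the abstract statement one invokes: if $E$ is a nuclear Fr\'echet space with dual $E'$ and $C:E\to\C$ satisfies (i) $C(0)=1$, (ii) $C$ is continuous, and (iii) $C$ is positive-definite, i.e. $\sum_{j,k=1}^{n}z_j\overline{z_k}\,C(\phi_j-\phi_k)\ge 0$ for every finite family $\phi_1,\dots,\phi_n\in E$ and $z_1,\dots,z_n\in\C$, then there is a unique Borel probability measure $\mathbb P$ on $E'$ with $\int_{E'}e^{i\langle\omega,\phi\rangle}\,d\mathbb P(\omega)=C(\phi)$ for all $\phi\in E$. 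Since $\mathcal S(\R^N)$ is a nuclear Fr\'echet space, it suffices to verify (i)--(iii) for this $C$.

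Property (i) is immediate. For (ii), I would bound the $L^2$ norm by a single Schwartz seminorm: choosing $k\in\N$ with $2k>N$, the inequality $|\phi(x)|\le(1+|x|^k)^{-1}\|\phi\|_{k,0}$ gives $\|\phi\|_{L^2(\R^N)}^2\le\|\phi\|_{k,0}^2\int_{\R^N}(1+|x|^k)^{-2}\,dx=:C_{N,k}\,\|\phi\|_{k,0}^2$, so $\phi\mapsto\|\phi\|_{L^2}$, and hence $C$, is continuous on $\mathcal S(\R^N)$. For (iii), the key step is the identity $\|\phi_j-\phi_k\|_{L^2}^2=\|\phi_j\|_{L^2}^2+\|\phi_k\|_{L^2}^2-2\langle\phi_j,\phi_k\rangle_{L^2}$; substituting $w_j:=z_j\,e^{-\|\phi_j\|_{L^2}^2/2}$ reduces the quadratic form to $\sum_{j,k}w_j\overline{w_k}\,e^{\langle\phi_j,\phi_k\rangle_{L^2}}$. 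The Gram matrix $\big(\langle\phi_j,\phi_k\rangle_{L^2}\big)_{j,k}$ is Hermitian positive semidefinite, so each entrywise (Schur) power $\big(\langle\phi_j,\phi_k\rangle_{L^2}^m\big)_{j,k}$ is positive semidefinite, and therefore $\big(e^{\langle\phi_j,\phi_k\rangle_{L^2}}\big)_{j,k}=\sum_{m\ge0}\tfrac1{m!}\big(\langle\phi_j,\phi_k\rangle_{L^2}^m\big)_{j,k}$ is positive semidefinite as a norm-convergent sum of such matrices. Hence the quadratic form is nonnegative and (iii) holds.

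The genuinely substantial point is not any of these verifications but the passage from a continuous positive-definite functional on $\mathcal S(\R^N)$ to an honest $\sigma$-additive Borel probability measure on $\mathcal S'(\R^N)$: in infinite dimensions such a functional only defines, a priori, a finitely additive cylinder measure, and it is precisely the nuclearity of $\mathcal S(\R^N)$ that forces the cylinder measure to be countably additive and to concentrate on $\mathcal S'(\R^N)$. I would therefore invoke the classical Minlos theorem directly. If a self-contained argument were wanted instead, one would first build the Gaussian cylinder measure from $C$ on finite-dimensional projections (where it is simply a Gaussian law pushed forward), then exploit a Hilbert--Schmidt factorization $\mathcal S\hookrightarrow L^2\hookrightarrow\mathcal S'$ together with a Fernique-type tail estimate to show the cylinder measure extends $\sigma$-additively and is carried by $\mathcal S'$; uniqueness is then automatic, since cylinder sets generate $\mathcal B(\mathcal S'(\R^N))$ and the characteristic functional determines the measure on them.
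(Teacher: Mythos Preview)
Your argument is correct: the three hypotheses of the abstract Bochner--Minlos theorem are verified cleanly, and your use of the Schur product theorem to establish positive-definiteness of the Gaussian characteristic functional is the standard route. The paper, however, does not prove this statement at all; it is quoted in the preliminaries as a classical background result (with implicit reference to the white-noise literature such as \cite{HidaBook}) and used as a black box to construct the underlying probability space. So there is nothing to compare against: you have supplied a proof where the paper simply cites one.
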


It follows that $\mathbb{E}[\langle\omega,\phi\rangle]=0$ and
$\mathbb{E}[\langle\omega,\phi\rangle^2]=\|\phi\|_{L^2(\R^N)}^2$ (It\^o isometry).
Using this isometry, the pairing extends to $\phi\in L^2(\R^N)$ by approximation.

For $x=(x_1,\dots,x_N)\in\R^N$, set
\[
\widetilde{B}(x):=\big\langle \omega,\chi_{[0,x_1]\times\cdots\times[0,x_N]}\big\rangle,
\]
where $[0,x_i]$ is understood as $[x_i,0]$ if $x_i<0$.
This process admits a continuous modification $\bb(x,\omega)$, defining an $N$-parameter Brownian motion
(Brownian sheet for $N\ge2$).
For $\phi\in L^2(\R^N)$, the $N$-parameter Wiener--It\^o integral is
\[
\int_{\R^N}\phi(x)\,d\bb(x,\omega):=\langle\omega,\phi\rangle.
\]

In the sequel we write $\bb(t,x):=\bb(t,x,\omega)$, $t\ge0$, $x\in\R^N$,
for the time--space Brownian motion (Brownian sheet).
Since $(t,x)\mapsto\bb(t,x)$ is almost surely continuous, we may define its distributional derivatives, and in particular
the time--space white noise by
\begin{equation}\label{white-noise}
\bw(t,x):=\frac{\partial}{\partial t}\frac{\partial^N}{\partial x_1\cdots\partial x_N}\bb(t,x).
\end{equation}
Equivalently, $\bw$ may be interpreted as an element of the Hida space $(\mathcal S)^\ast$ of stochastic distributions.
In this setting, the It\^o--Skorohod integral with respect to $\bb(dt,dx)$ can be written via Wick product $\circ$ as (see, e.g., \cite{LindstromUboe,Benth})
\begin{equation}\label{Ito-Skorohod}
\int_0^T\int_{\R^N} f(t,x,\omega)\,\bb(dt,dx)
=\int_0^T\int_{\R^N} f(t,x,\omega)\circ \bw(t,x)\,dt\,dx.
\end{equation}
In particular, if $f$ is deterministic, then
\begin{equation}\label{Wiener-integral}
\int_0^T\int_{\R^N} f(t,x)\,\bb(dt,dx)
=\int_0^T\int_{\R^N} f(t,x)\,\bw(t,x)\,dt\,dx.
\end{equation}
This is the stochastic integration interpretation adopted throughout the paper.

\section{Proof of  main results}
\label{S3}

\subsection{Proof of Theorem \ref{Thm1}}
Assume that $\bz(t, x)$ is a solution of \eqref{Main-eq}-\eqref{eq:IC-Dirac}.  
Applying the Laplace transform $\mathcal{L}$ to both sides of \eqref{Main-eq}, we obtain
\[
s^{\alpha} \tilde{\bz}(s, x) - s^{\alpha-1} \bz(0, x) 
= \bigl[\bj \ast \tilde{\bz}\bigr](s, x) - \tilde{\bz}(s, x) + \sigma \tilde{W}(s, x). 
\]
Next, applying the Fourier transform $\mathcal{F}$ and using $\hat{\bz}(0, \xi) = 1$, we get
\[
s^{\alpha} \hat{\tilde{\bz}}(s, \xi) - s^{\alpha-1} 
= -a(\xi)\,\hat{\tilde{\bz}}(s, \xi) + \sigma \hat{\tilde{W}}(s, \xi),
\]
where $a(\xi)$ is given by \eqref{Z1}.  
Thus,
\begin{equation}\label{F-L}
\bigl(s^{\alpha} + a(\xi)\bigr) \hat{\tilde{\bz}}(s, \xi) 
= s^{\alpha-1} + \sigma \hat{\tilde{W}}(s, \xi).
\end{equation}

Since $|\hat{\bj}(\xi)| \leq \int \bj(x)\,dx = 1$ and $\lambda, \mu\ge 0$, it follows that $a(\xi) \geq 0$.  
Therefore, from \eqref{F-L} we deduce
\begin{equation}\label{F-L1}
\hat{\tilde{\bz}}(s, \xi) 
= \frac{s^{\alpha-1}}{s^{\alpha} + a(\xi)} 
+ \frac{\sigma \hat{\tilde{W}}(s, \xi)}{s^{\alpha} + a(\xi)}.
\end{equation}
Since the Laplace and Fourier transforms commute, we can rewrite this as
\begin{equation}\label{F-L2}
\tilde{\hat{\bz}}(s, \xi) 
= \frac{s^{\alpha-1}}{s^{\alpha} + a(\xi)} 
+ \frac{\sigma \tilde{\hat{W}}(s, \xi)}{s^{\alpha} + a(\xi)}.
\end{equation}

Applying the inverse Laplace transform $\mathcal{L}^{-1}$ to \eqref{F-L2}, we obtain
\begin{equation}\label{Four-Z1}
\begin{split}
\hat{\bz}(t, \xi) 
&= \mathcal{L}^{-1}\!\left( \frac{s^{\alpha-1}}{s^{\alpha} + a(\xi)} \right)(t, \xi) 
+ \mathcal{L}^{-1}\!\left( \frac{\sigma \tilde{\hat{W}}(s, \xi)}{s^{\alpha} + a(\xi)} \right)(t, \xi) \\
&= E_{\alpha}\!\bigl(-a(\xi) t^{\alpha}\bigr) 
+ \mathcal{L}^{-1}\!\left( \frac{\sigma \tilde{\hat{W}}(s, \xi)}{s^{\alpha} + a(\xi)} \right)(t, \xi),
\end{split}
\end{equation}
where $E_{\alpha}$ is the Mittag--Leffler function given by \eqref{Mitag}.
It remains to compute 
$$\mathcal{L}^{-1}\!\left(\frac{\sigma \tilde{\hat{W}}(s, \xi)}{s^{\alpha} + a(\xi)} \right)(t, \xi).$$
Note that
\begin{equation}\label{inv-Lap1}
\frac{\sigma}{s^{\alpha} + a(\xi)} = \sigma \,\mathcal{L}\bigl(\Lambda(t,\xi)\bigr),
\end{equation}
where $\Lambda(t,\xi)$ is given by \eqref{Z2}. Hence,
\begin{equation}\label{inv-Lap2}
\begin{split}
\mathcal{L}^{-1}\!\left(\frac{\sigma \tilde{\hat{W}}(s, \xi)}{s^{\alpha} + a(\xi)} \right)(t, \xi) 
&= \sigma\, \mathcal{L}^{-1}\!\left(\mathcal{L}[\hat{W}(t,\xi)](s)\,\mathcal{L}[\Lambda(t,\xi)](s)\right) \\
&= \sigma \int_0^t \Lambda(t-\tau,\xi)\,\hat{W}(\tau,\xi)\,d\tau.
\end{split}
\end{equation}

Substituting \eqref{inv-Lap2} into \eqref{Four-Z1}, we obtain
\begin{equation}\label{Four-Z2}
\hat{\bz}(t, \xi) 
= E_{\alpha}\!\bigl(-a(\xi) t^{\alpha}\bigr) 
+ \sigma \int_0^t \Lambda(t-\tau,\xi)\,\hat{W}(\tau,\xi)\,d\tau.
\end{equation}

Finally, applying the inverse Fourier transform gives
\begin{equation}\label{Inv-Four}
\begin{split}
\bz(t,x) 
&= (2\pi)^{-N} \int_{\R^N} e^{i x\cdot \xi}\, E_{\alpha}\!\bigl(-a(\xi) t^\alpha\bigr)\, d\xi \\
&\quad + \sigma \int_0^t \mathcal{F}^{-1}\!\left( \Lambda(t-\tau, \xi)\, \hat{W}(\tau,\xi) \right) d\tau \\
&= (2\pi)^{-N} \int_{\R^N} e^{i x\cdot \xi}\, E_{\alpha}\!\bigl(-a(\xi) t^\alpha\bigr)\, d\xi \\
&\quad + \sigma (2\pi)^{-N} \int_0^t \int_{\R^N}\int_{\R^N} e^{i(x-y)\cdot \xi}\, \Lambda(t-\tau, \xi)\, \bw(\tau, y)\, d\xi\,dy\,d\tau.
\end{split}
\end{equation}
This concludes the proof of Theorem~\ref{Thm1}.

\subsection{Proof of Theorem \ref{Thm2}} 
Recall that according to Theorem~\ref{Thm1}, the solution $\bz$ can be expressed as 
\begin{equation}
    \label{ZZ}
    \bz(t,x)=\bz_1(t,x)+\bz_2(t,x),
\end{equation}
where 
\begin{equation}
    \label{z12}
    \begin{split}
        \bz_1(t,x)&= (2\pi)^{-N}
   \int\limits_{\R^N} 
   \mathrm{e}^{\,\mathrm{i}\, x\;\cdot\;\xi}\,
   E_{\alpha}\!\big(-a(\xi)\, t^{\alpha}\big)\,
   \mathrm{d}\xi,\\
        \bz_2(t,x)&=\sigma(2\pi)^{-N}
   \int\limits_{0}^{t}(t-\tau)^{\alpha-1}\,\int\limits_{\R^N}\,\left(\int\limits_{\R^N}
   \mathrm{e}^{\,\mathrm{i}\,(x-y)\;\cdot\;\xi}\,
   E_{\alpha, \alpha}\left(-(t-\tau)^\alpha a(\xi)\right)d\xi\right)\,
   \bb(dy, d\tau),
    \end{split}
\end{equation}
and $a(\xi)$ is given by \eqref{Z1}. To compute $\mathbb{E}\left[\bz^2(t,x)\right]$, we use It\^o isometry to get
\begin{equation}
    \label{EST-1}
  \mathbb{E}\left[\bz^2(t,x)\right]=\mathcal{M}_1(t,x)+ \mathcal{M}_2(t,x), 
\end{equation}
where
\begin{equation}
    \label{Est-2}
    \begin{split}
        \mathcal{M}_1(t,x)&=(2\pi)^{-2N}\left(
   \int\limits_{\R^N} 
   \mathrm{e}^{\,\mathrm{i}\, x\;\cdot\;\xi}\,
   E_{\alpha}\!\big(-a(\xi)\, t^{\alpha}\big)\,
   \mathrm{d}\xi\right)^2,\\
        \mathcal{M}_2(t,x)&=\sigma^{2} (2\pi)^{-2N} 
\int\limits_{0}^{t} \int\limits_{\mathbb{R}^{N}} 
(t - \tau)^{2\alpha - 2} 
\left( 
    \int\limits_{\mathbb{R}^{N}} 
        e^{i(x - y)\;\cdot\;\xi}\;
        E_{\alpha, \alpha}\!\left(-(t - \tau)^{\alpha} a(\xi)\right)
    d\xi
\right)^{2} 
dy\,d\tau.
    \end{split}
\end{equation}
By Plancherel formula \eqref{planch}, we can rewrite $\mathcal{M}_2(t,x)$ as
\begin{equation}
    \label{M-2-planch}
    \begin{split}
    \mathcal{M}_2(t,x)&=\sigma^2\,(2\pi)^{-N}\int\limits_0^t\,\int\limits_{\R^N}\,(t - \tau)^{2\alpha - 2}\, \bigg( E_{\alpha, \alpha} \big(-(t - \tau)^{\alpha} a(x-y)\big)\bigg)^2\,dy\,d\tau\\
    &=\sigma^2\,(2\pi)^{-N}\int\limits_0^t\,\int\limits_{\R^N}\,s^{2\alpha - 2}\,\bigg( E_{\alpha, \alpha} \big(-s^{\alpha} a(\xi)\big)\bigg)^2\,d\xi\,ds.
    \end{split}
\end{equation}
We first examine the case $\lambda=0$. Since $a(\xi)\to\mu>0$ as $|\xi|\to\infty$, it follows that
\begin{equation}
\label{lambda0}
\bigl|E_{\alpha}(-a(\xi)t^{\alpha})\bigr|
\;\xrightarrow[|\xi|\to\infty]{}\;
\bigl|E_{\alpha}(-\mu\,t^{\alpha})\bigr|.
\end{equation}

If $0<\alpha\le 1$, Theorem~\ref{thm:zeros_ML}\,(ii) ensures that
$E_{\alpha}(-\mu\,t^{\alpha})>0$ for all $t>0$. Hence, for every $t>0$, the mapping
$\xi\mapsto E_{\alpha}(-a(\xi)t^{\alpha})$ does not belong to $L^{1}(\mathbb{R}^{N})$
for any $N\ge1$.

If $1<\alpha<2$, Theorem~\ref{thm:zeros_ML}\,(iii) yields
$\bigl|E_{\alpha}(-\mu\,t^{\alpha})\bigr|>0$ for almost every $t>0$. As a consequence,
for almost every $t>0$, the function
$\xi\mapsto E_{\alpha}(-a(\xi)t^{\alpha})$ fails to be integrable over $\mathbb{R}^{N}$.

We therefore conclude that the solution cannot be mild for any $\alpha\in(0,2)$ and
any space dimension $N\ge1$. This completes the proof of
Theorem~\ref{Thm2}\,(i).\\
We next turn to the case $\lambda>0$.  
Let $\alpha\in(0,1)$. By Lemma~\ref{lem:ML-Lp}, the mapping
\[
\xi \longmapsto E_{\alpha}\!\left(-a(\xi)t^{\alpha}\right)
\]
belongs to $L^{1}(\mathbb{R}^{N})$ if and only if $N=1$. Consequently,
\[
\mathcal{M}_{1}(t,x)<\infty \quad \Longleftrightarrow \quad N=1.
\]
Furthermore, combining \eqref{M-2-planch} with Lemma~\ref{lem:b}, we obtain
\[
\mathcal{M}_{2}(t,x)<\infty 
\quad \Longleftrightarrow \quad N=1 \ \text{and}\ \frac{2}{3}<\alpha<1.
\]
Hence, in the range $\alpha\in(0,1)$, the solution is mild if and only if
$N=1$ and $2/3<\alpha<1$.

We now consider $\alpha\in(1,2)$. By Theorem~\ref{thm:asympt-ML}\,(2), we have
\[
\mathcal{M}_{1}(t,x)<\infty \qquad \text{for all } N\ge1.
\]
Applying again Theorem~\ref{thm:asympt-ML}\,(2) together with
\eqref{M-2-planch} and using the asymptotic behavior
$a(\xi)\sim \lambda|\xi|^{2}$ as $|\xi|\to\infty$, we infer that
\[
\mathcal{M}_{2}(t,x)<\infty 
\quad \Longleftrightarrow \quad (4-N)\alpha>2.
\]
Since $\alpha<2$, this condition implies $N<3$, namely $N=1,2$.
Moreover, for $N=1,2$, the inequality $(4-N)\alpha>2$ is automatically
satisfied because $\alpha>1$. Therefore, for $\alpha\in(1,2)$, the
solution is mild if and only if $N=1$ or $N=2$.

Finally, in the  case $\alpha=1$, the solution is mild only in
dimension $N=1$, since the condition $(4-N)\alpha>2$ under the restriction
$N=1,2$ forces $N=1$.

This completes the proof of Theorem~\ref{Thm2}.

\section{Numerical simulations}
\label{sec:num}

\noindent\underline{\sf Classical homogeneous heat equation.}
Consider the classical heat equation on $\R$,
\begin{equation}\label{eq:heat_classical}
\partial_t \bz(t,x)=\lambda\,\Delta\bz(t,x),
\qquad t>0,\; x\in\R,
\end{equation}
where $\lambda>0$ is the diffusion coefficient.
We impose the Dirac initial datum and vanishing condition at infinity,
\begin{equation}\label{eq:ICBC_classical}
\bz(0,x)=\delta_0(x),
\qquad
\lim_{|x|\to\infty}\bz(t,x)=0.
\end{equation}
The corresponding fundamental solution is the Gaussian heat kernel
\begin{equation}\label{eq:gaussian_kernel}
\bz(t,x)=\frac{1}{\sqrt{4\pi\lambda t}}
\exp\!\Bigl(-\frac{x^2}{4\lambda t}\Bigr),
\qquad t>0.
\end{equation}

\medskip
\noindent\underline{\sf Inhomogeneous heat equation.}
If a deterministic source term $g=g(t,x)$ is added,
\begin{equation}\label{eq:heat_inhom}
\partial_t \bz(t,x)=\lambda\,\Delta\bz(t,x)+g(t,x),
\qquad t>0,\; x\in\R,
\end{equation}
then the solution can be written in Duhamel form as
\begin{equation}\label{eq:duhamel_det}
\bz(t,x)=\bz_h(t,x)+\bz_p(t,x),
\end{equation}
where $\bz_h$ is the homogeneous solution \eqref{eq:gaussian_kernel} and
\begin{equation}\label{eq:duhamel_particular}
\bz_p(t,x)=\int_0^t\int_{\R}
\frac{1}{\sqrt{4\pi\lambda(t-\tau)}}
\exp\!\Bigl(-\frac{(x-y)^2}{4\lambda(t-\tau)}\Bigr)\,
g(\tau,y)\,dy\,d\tau,
\end{equation}
whenever the integrals are meaningful (e.g.\ under standard $L^2$-type assumptions on $g$).

\medskip
\noindent\underline{\sf Heat equation driven by time--space white noise.}
We now replace $g(t,x)$ by a scaled space--time white noise $\bw(t,x)$ and consider
\begin{equation}\label{eq:stoch_heat_alpha1}
\partial_t\bz(t,x)=\lambda\,\Delta\bz(t,x)+\sigma\,\bw(t,x),
\qquad \sigma\in\R.
\end{equation}
Formally, the mild solution reads
\begin{equation}\label{eq:mild_alpha1}
\bz(t,x)=\frac{1}{\sqrt{4\pi\lambda t}}
\exp\!\Bigl(-\frac{x^2}{4\lambda t}\Bigr)
+
\sigma\int_0^t\int_{\R}
\frac{1}{\sqrt{4\pi\lambda(t-\tau)}}
\exp\!\Bigl(-\frac{(x-y)^2}{4\lambda(t-\tau)}\Bigr)\,
\bw(\tau,y)\,dy\,d\tau.
\end{equation}

Since the forcing is additive, $\bz(t,x)$ is Gaussian, and in the
Wiener--Hermite expansion (WHE) \cite{Beltagy1,Beltagy2,Beltagy3}
it is natural to retain only the mean and the first fluctuation term.
Following \cite{Beltagy1,Beltagy2,Beltagy3}, we write
\begin{equation}\label{eq:WHE_alpha1}
\bz(t,x)=\bz_0(t,x)+\int_0^t\int_{-\infty}^{x}
\bz_1(t,x,\tau,\xi)\,H_1(\tau,\xi)\,d\tau\,d\xi,
\end{equation}
where the mean $\bz_0(t,x):=\E[\bz(t,x)]$ and $H_1$ is the first Hermite functional,
which coincides with white noise:
$H_1(t,x)=\bw(t,x)$.
(Under an independence ansatz one may further write $H_1(t,x)=H_1(t)H_1(x)=\bw(t)\bw(x)$,
but this factorization is not needed for the identities below.)
Consequently,
\begin{equation}\label{eq:moments_alpha1}
\E[\bz(t,x)]=\bz_0(t,x),
\qquad
\Var[\bz(t,x)]
=\int_0^t\int_{-\infty}^{x}\bigl|\bz_1(t,x,\tau,\xi)\bigr|^2\,d\tau\,d\xi.
\end{equation}
Substituting \eqref{eq:WHE_alpha1} into \eqref{eq:stoch_heat_alpha1} and using
orthogonality of Hermite functionals yields the fluctuation equation
\begin{equation}\label{eq:fluct_alpha1}
\begin{split}
\partial_t\bz_1(t,x;t_1,x_1)
&=\lambda\,\Delta\bz_1(t,x;t_1,x_1)
+\sigma\,\delta(t-t_1)\delta(x-x_1),\\
\bz_1(0,x;t_1,x_1)&=0,\quad
\lim_{|x|\to\infty}\bz_1(t,x;t_1,x_1)=0,
\end{split}
\end{equation}
cf.\ \cite[Eq.~(4.48)]{Beltagy1}.

The corresponding particular solution is the translated heat kernel
\begin{equation}\label{eq:z1_alpha1}
\bz_1(t,x;t_1,x_1)
=
\frac{\sigma}{\sqrt{4\pi\lambda(t-t_1)}}
\exp\!\Bigl(-\frac{(x-x_1)^2}{4\lambda(t-t_1)}\Bigr),
\qquad t>t_1,
\end{equation}
as in \cite[Eq.~(4.49)]{Beltagy1}.

Therefore,
\begin{equation}\label{eq:var_int_alpha1}
\Var[\bz(t,x)]
=
\sigma^2\int_0^t\int_{\R}
\frac{1}{4\pi\lambda(t-\tau)}
\exp\!\Bigl(-\frac{(x-y)^2}{2\lambda(t-\tau)}\Bigr)\,dy\,d\tau,
\end{equation}
which coincides with \cite[Eq.~(4.50)]{Beltagy1}.
After evaluating the integrals and using the symmetry in $x$, one obtains the closed form
\begin{equation}\label{eq:var_erf_alpha1}
\Var[\bz(t,x)]
=
\frac{\sigma^{2}}{4\lambda\sqrt{\pi t}}\,
\erfc\!\Bigl(\frac{|x|}{4\sqrt{\lambda t}}\Bigr),
\end{equation}
in agreement with \cite[Eq.~(4.51)]{Beltagy1}.

In particular, the variance increases with $t$ and decays as $|x|\to\infty$,
capturing the spatial dissipation of fluctuations.

\begin{figure}[t]
\centering
\includegraphics[width=.55\textwidth]{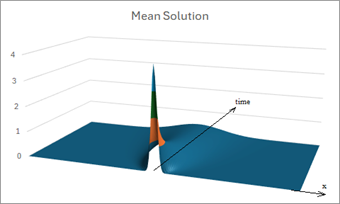}
\caption{Mean solution $\bz_0(t,x)$ for the classical case $\alpha=1$.}
\label{fig:Mean}
\end{figure}

\begin{figure}[t]
\centering
\includegraphics[width=.55\textwidth]{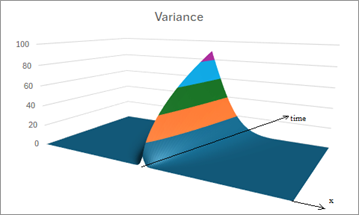}
\caption{Variance $\Var[\bz(t,x)]$ for the classical case $\alpha=1$.}
\label{fig:Var}
\end{figure}


\noindent\underline{\sf Time--fractional stochastic heat equation.}
Let $0<\alpha<1$ and consider
\begin{equation}\label{eq:stoch_heat_frac}
\partial_t^\alpha \bz(t,x)=\lambda\,\Delta\bz(t,x)+\sigma\,\bw(t,x),
\qquad t>0,\; x\in\R,
\end{equation}
with the same initial and boundary conditions as in \eqref{eq:ICBC_classical}.
Using WHE \cite{Beltagy1,Beltagy2,Beltagy3}, the mean field $\bz_0(t,x):=\E[\bz(t,x)]$
solves the deterministic fractional heat equation
\begin{equation}\label{eq:mean_frac_eq}
\partial_t^\alpha \bz_0(t,x)=\lambda\,\Delta\bz_0(t,x),
\qquad
\bz_0(0,x)=\delta_0(x),
\qquad
\lim_{|x|\to\infty}\bz_0(t,x)=0,
\end{equation}
while the first fluctuation kernel satisfies
\begin{equation}\label{eq:fluct_frac_eq}
\begin{split}
\partial_t^\alpha \bz_1(t,x;t_1,x_1)&=\lambda\,\Delta\bz_1(t,x;t_1,x_1)
+\sigma\,\delta(t-t_1)\delta(x-x_1),\\
\bz_1(0,x;t_1,x_1)&=0,\quad
\lim_{|x|\to\infty}\bz_1(t,x;t_1,x_1)=0,
\end{split}
\end{equation}
as stated in \cite[\S4.2]{Beltagy1}.

\medskip

\noindent\underline{\sf Mean solution and the Mainardi function.}
The mean solution admits the Fourier representation
\begin{equation}\label{eq:mean_fourier}
\bz_0(t,x)=\frac1{2\pi}\int_{\R} E_\alpha\!\bigl(-(\lambda k^2)t^\alpha\bigr)\,e^{ikx}\,dk,
\end{equation}
and can be written in closed form via the Mainardi function $M_\alpha$,
\begin{equation}\label{eq:mean_mainardi}
\bz_0(t,x)=\frac{1}{\sqrt{4\pi\lambda\,t^\alpha}}\,
M_\alpha\!\Bigl(\frac{|x|}{\sqrt{\lambda t^\alpha}}\Bigr),
\end{equation}
where
\begin{equation}\label{eq:Mainardi_def}
M_\alpha(u)=\sum_{n=0}^{\infty}\frac{(-1)^n u^{2n}}{(2n)!\,\Gamma(\alpha n-\alpha+1)},
\qquad 0<\alpha<1,
\end{equation}
see \cite[\S4.2]{Beltagy1}. Figure~\ref{fig:Mean-alpha} illustrates the mean solution corresponding to $\alpha=0.6$.
\begin{figure}[H]
    \begin{minipage}{0.4\textwidth}
        \centering
\includegraphics[width=\textwidth]{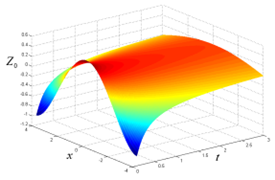}
    \end{minipage}
    \begin{minipage}{0.4\textwidth}
        \centering
\includegraphics[width=\textwidth]{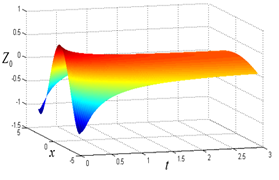}
    \end{minipage}
    \caption{Mean solution $\bz_0(t,x)$, $\alpha=0.6, \lambda=1$.}
        \label{fig:Mean-alpha}
\end{figure}

For the special case $\alpha=\tfrac12$,
\begin{equation}\label{eq:Mainardi_half}
M_{1/2}(u)=\frac{1+u}{\sqrt{\pi}}\,e^{-u^2/4},
\end{equation}
and therefore
\begin{equation}\label{eq:mean_half}
\bz_0(t,x)=
\frac{1}{\sqrt{4\pi\lambda\,t^{1/2}}}
\Bigl(1+\frac{|x|}{\sqrt{4\lambda\,t^{1/2}}}\Bigr)
\exp\!\Bigl(-\frac{x^2}{4\lambda\,t^{1/2}}\Bigr),
\end{equation}
as in \cite[\S4.2]{Beltagy1}. This highlights the subdiffusive scaling $t^{\alpha/2}$ (here $t^{1/4}$) and the slower spreading
compared to the classical case. 

\medskip

\noindent\underline{\sf Fluctuations and variance.}
Using a Green function representation, the first fluctuation kernel is
\begin{equation*}\label{eq:fluct_kernel_frac}
\bz_1(t,x;t_1,x_1)=
\frac{\sigma}{\sqrt{4\pi\lambda\,(t-t_1)^\alpha}}\,
E_{\alpha,\alpha}\!\Bigl(-\frac{(x-x_1)^2}{4\lambda\,(t-t_1)^\alpha}\Bigr),
\qquad 0<t_1<t,
\end{equation*}
see \cite[\S4.2]{Beltagy1}.

By symmetry about $x=0$, for $x>0$ the spatial integration may be restricted to $0<y<x$,
and the variance becomes
\begin{equation*}\label{eq:var_frac_integral}
\Var[\bz(t,x)]
=
\int_0^t\int_{0}^{x}\bigl|\bz_1(t,x;\tau,y)\bigr|^2\,d\tau\,dy
=
\frac{\sigma^2}{4\pi\lambda}\int_0^t\int_0^{x}
(t-\tau)^{-\alpha}
\Biggl[
E_{\alpha,\alpha}\!\Bigl(-\frac{(x-y)^2}{4\lambda\,(t-\tau)^\alpha}\Bigr)
\Biggr]^2\,dy\,d\tau.
\end{equation*}

This representation is well suited for numerical quadrature in $(\tau,y)$.

For $\alpha=\tfrac12$, one may also use the series definition
\begin{equation*}\label{eq:Ehalfhalf_series}
E_{1/2,1/2}(z)=\sum_{k=0}^{\infty}\frac{z^k}{\Gamma\!\left(\frac{k+1}{2}\right)},
\qquad
z=-\frac{(x-y)^2}{4\lambda\,(t-\tau)^{1/2}},
\end{equation*}
leading to an explicit double-series expression for the variance as reported in \cite[\S4.2]{Beltagy1}.

\subsection{Time--fractional heat equation: series representation of the variance}
\label{subsec:series_variance}

For numerical purposes, it is convenient to work with a series form of the variance in the
time--fractional regime. In \cite[\S4.3]{Beltagy1} one arrives at the expansion
\begin{equation*}\label{eq:var_series}
\Var[\bz(t,x)]
=
\frac{\sigma^2}{4\pi\lambda}
\sum_{m=0}^{\infty}
\frac{(-1)^m\,\beta_m\,|x|^{2m+1}}{4^m(2m+1)\,\bigl(1-(m+1)\alpha\bigr)}\,
t^{\,1-(m+1)\alpha},
\end{equation*}
where
\begin{equation*}\label{eq:beta_m}
\beta_m=\sum_{k=0}^{m}\frac{1}{\Gamma(m-k+1)\,\Gamma(\alpha(k+1))}.
\end{equation*}

The series converges for all $\alpha\in(0,1)$ except at the resonance values
\begin{equation*}\label{eq:resonance}
\alpha=\frac{1}{m+1},
\qquad m\in\N,
\end{equation*}
for which the denominator $1-(m+1)\alpha$ vanishes and the variance diverges.

\begin{figure}[H]
\centering
\includegraphics[width=.55\textwidth]{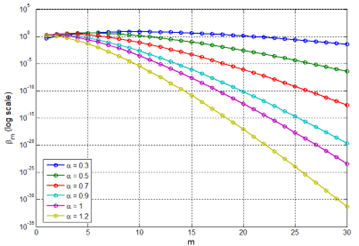}
\caption{Decay of the coefficients $\beta_m$ versus $m$.}
\label{fig:Deacy}
\end{figure}

\begin{figure}[H]
\centering
\includegraphics[width=.55\textwidth]{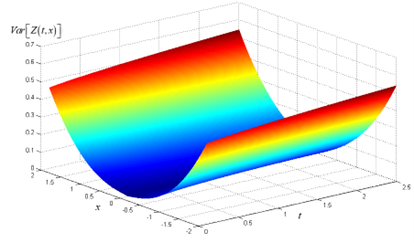}
\caption{Variance $\Var[\bz(t,x)]$ for $\alpha=0.6$ and $\lambda=1$ (e.g.\ $\sigma=1$).}
\label{fig:Var-alpha}
\end{figure}

\subsection{Comments on the figures}
\label{subsec:comments_figures}
Figure~\ref{fig:Mean} depicts the mean solution $\bz_0(t,x)$ for the classical case $\alpha=1$.
As expected, the profile coincides with the Gaussian heat kernel and is symmetric with respect to $x=0$, reflecting the Dirac initial condition and the spatial homogeneity of the Laplacian.
As time increases, the peak at the origin decreases while the solution spreads spatially with the classical diffusive scaling $|x|\sim t^{1/2}$.
This behavior serves as a reference benchmark for comparison with the fractional cases discussed below.

Figure~\ref{fig:Var} shows the variance $\mathrm{Var}[\bz(t,x)]$ in the classical stochastic heat equation.
The variance profile is even in $x$ and attains its maximum at the origin.
As $t$ grows, the variance increases, indicating the accumulation of stochastic fluctuations over time, while spatially it decays as $|x|\to\infty$ due to the smoothing effect of the heat operator.
This illustrates the balance between stochastic forcing and diffusive dissipation in the Markovian case $\alpha=1$.

Figure~\ref{fig:Mean-alpha} presents the mean solution $\bz_0(t,x)$ for a genuinely fractional regime with $\alpha=0.6$.
Compared to Figure~1, the spreading of the profile is significantly slower, which is a hallmark of subdiffusion induced by the Caputo fractional derivative.
The characteristic spatial scale is now of order $t^{\alpha/2}$ rather than $t^{1/2}$, and the profile exhibits heavier tails.
This highlights the strong memory effects encoded in the fractional time derivative and clearly illustrates the departure from classical diffusion.

\paragraph{Figure~\ref{fig:Deacy}: decay of $\beta_m$.}
The coefficients $\beta_m$ in \eqref{eq:var_series} decrease quickly with $m$, becoming very small for
moderately large $m$ (numerically, already around $m\gtrsim 15$), which is favorable for efficient computation
via truncation. This behavior is consistent with absolute convergence of \eqref{eq:var_series} for generic
$\alpha\in(0,1)$, except at the resonant values \eqref{eq:resonance} where the denominators vanish ; see \cite[\S4.4]{Beltagy1}.

Figure~\ref{fig:Var-alpha} exhibits an even profile in $x$, consistent with the symmetric Dirac initial datum
and the spatial homogeneity of the operator. As $t$ increases, the peak at $x=0$ decreases and the mass spreads
in space; for $0<\alpha<1$ this spreading is subdiffusive, with characteristic scale $t^{\alpha/2}$ rather than
$t^{1/2}$. Moreover, the temporal relaxation is slower than in the classical case due to the memory encoded by
the Caputo derivative.

\noindent$\rule[0.05cm]{16.5cm}{0.05cm}$

\noindent{\bf\large Acknowledgements.} {\em The authors express their sincere gratitude to Professor Yuri Luchko for valuable insights into the asymptotic behavior of Mittag–Leffler functions.}

\noindent$\rule[0.05cm]{16.5cm}{0.05cm}$

\noindent{\bf\large Declarations.}
On behalf of all authors, the corresponding author states that there is no conflict of interest. 
No data-sets were generated or analyzed during the current study.

\noindent$\rule[0.05cm]{16.5cm}{0.05cm}$


\end{document}